\def\({\left(}
\def\){\right)}
\def\Nx{\nabla_x}
\def\eb{\varepsilon}
\def\R {\mathbb{R}}
\newcommand{\be}{\begin{equation} }
\newcommand{\ee}{\end{equation} }
\def \and{\qquad\text{and}\qquad}
\def\Dt{\partial_t}
\def\({\left(}
\def\){\right)}
\def\Nx{\nabla}
\def\eb{\varepsilon}
\def\eb{\varepsilon}
\def\R {\mathbb{R}}
\def \and{\qquad\text{and}\qquad}
\def\Dt{\partial_t}
\newtheorem{proposition}{Proposition}[section]
\newtheorem{theorem}[proposition]{Theorem}
\newtheorem{corollary}[proposition]{Corollary}
\theoremstyle{definition}
\newtheorem{remark}[proposition]{Remark}
\numberwithin{equation}{section}
\def\be{\begin{equation}}
\def\ee{\end{equation}}
\def\bp{\begin{proof}}
\def\ep{\end{proof}}
\def \no#1#2#3 {{\bf #1} (#3), #2.}
\def \eds#1#2#3 {#1, #2, #3.}
\title[Preventing blow up by convection]
{Preventing blow up by convective terms in dissipative PDEs}
\author[B. Bilgen, V. Kalantarov and S. Zelik] {Bilgesu Bilgen ${}^1$, Varga Kalantarov ${}^2$ and Sergey Zelik${}^2$}
\address{${}^3$
University of Surrey, Department of Mathematics,
Guildford, GU2 7XH, United Kingdom, s.zelik@surrey.ac.uk.}
\address{${}^{2,1}$ Department of mathematics, Ko{\c c} University, Rumelifeneri Yolu, Sariyer, Istanbul, Turkey, vkalantarov@ku.edu.tr, bilbilgin@ku.edu.tr}
\begin{document}

\begin{abstract}
We study the impact of the convective terms on the global solvability or finite time blow up of solutions of dissipative PDEs. We consider the model examples of 1D Burger's type equations, convective Cahn-Hilliard equation, generalized Kuramoto-Sivashinsky equation and KdV type equations, we establish the following common scenario: adding sufficiently strong (in comparison with the destabilizing nonlinearity) convective terms to equation prevents the solutions from blowing up in finite time and makes the considered system globally well-posed and dissipative and for weak enough convective terms the finite time blow up may occur similarly to the case when the equation does not involve convective term.
\par
This kind of result has been previously known  for the case of Burger's type equations  and has been strongly based on maximum principle. In contrast to this, our results are based on the weighted energy estimates which do not require the maximum principle for the considered problem.
\end{abstract}
\subjclass[2010]{35A01, 35B44, 35B45}
\keywords{Global existence, preventing blow-up, Burger's type equations,
Kuramoto-Sivashynsky equation, convective Cahn-Hilliard equation, KdV type equations}
\thanks{This work is partially supported by  the grant  14-41-00044 of RSF}
\maketitle
\tableofcontents

\section{Introduction}
It is well-known that the solutions of nonlinear evolutionary PDEs may blow up in a finite time. The most studied is the case of a semilinear heat equation, for instance, all positive solutions of the problem
\begin{equation}\label{0.heat}
\partial_t u=\partial^2_x u+u^2,\ \ u\big|_{x=\pm1}=0,\ \ u\big|_{t=0}=u_0
\end{equation}
blow up in finite time, see e.g., \cite{QS} and references therein, see also \cite{AlKoSv,Ba,BeTi,Gal, KaLa,Le1,Le2,LePa,Po} for the analogous results for more complicated equations.
\par
It is also known  that the presence of convective terms may prevent blow up and makes the problem globally well-posed. In particular, as shown in \cite{ChLeSa} and \cite{LePaSaSt},
the solutions of the problem
\begin{equation}\label{0.heat-bur}
\partial_t u+\varepsilon u\partial_xu =\partial^2_x u+u^2,\ \ u\big|_{x=\pm1}=0,\ \ u\big|_{t=0}=u_0
\end{equation}
which differs from \eqref{0.heat} by the presence of the convective term $\eb u\partial_xu$ becomes globally well-posed if  $\eb\ne0$. However, the proof of this fact given there is strongly based on the maximum principle and the situation becomes  essentially less clear for more complicated equations where the impact of the convective terms is a bit controversial. Indeed, for example, in the case of Navier-Stokes equations, the convective/inertial form $(u,\nabla_x)u$ is the only nonlinearity in the system and the only source of instability and possible blow up in the 3D case. On the other hand, if we consider the associated vorticity equation
\begin{equation}\label{0.vort}
\partial_t\omega+(u,\nabla_x)\omega=\Delta_x \omega+(\omega,\nabla_x)u
\end{equation}
and replace (very roughly) the term $\nabla_x u$ by $\omega$ in the vorticity stretching term and $u$ by $\omega$ in the vorticity transport term, we end up with the model equations similar to \eqref{0.heat}. This analogy can be made more precise, namely, as shown in \cite{fursikov}, the quadratic nonlinearity $B(\omega):=(\omega,\nabla_x)u-(u,\nabla_x)\omega$ can be split into the sum of two non-local operators $B(\omega)=B_\tau(\omega)+B_n(\omega)$ such that the tangential component $B_\tau(\omega)$ satifies
$$
(B_\tau(\omega),\omega)=0
$$
and can be interpreted as a generalized convective term and the equation
\begin{equation}\label{0.vort-norm}
\partial_t\omega=\Delta_x \omega +B_n(\omega),
\end{equation}
where only the normal component of the nonlinearity is presented possesses solutions blowing up in finite time. Thus, the Millennium problem concerning the Navier-Stokes equations can be reduced to the question whether or not the "convective" term $B_\tau(\omega)$ is strong enough to prevent blow up in the normal parabolic system \eqref{0.vort-norm}.
\par
Unfortunately, the above question seems out of reach of the modern methods, so the main aim of the present paper is to analyze the impact of the convective terms  for simpler equations where more or less complete answer is available despite the absence of the maximum principle. As the model examples, we consider the generalized Kuramoto-Sivashinsky equation, the so-called convective Cahn-Hilliard equation and the Korteveg de Vries type equations on a finite interval with Dirichlet boundary conditions. As we will see below, despite the fact that these equations have essentially different structure, they can be treated in the unified way and the obtained results look very similar to the case of Burgers type equations studied before. Namely, if the convective term is strong enough (in comparison with the destabilizing non-linearity), it prevents blow up of solutions and makes the considered problem globally well-posed and dissipative and if it is not sufficiently strong, the blow up may occur  in the convective equations as well. We believe that this picture has a general nature and may be useful in the study of more complicated cases including  the 3D Navier-Stokes equations.
\par
The paper is organized as follows.
\par
In Section \ref{s1} we revisit the case of 1D Burger's type equations:
\begin{equation}\label{0.eq1}
\partial_tu+\partial_x(|u|^{p+1})=\partial_x^2u+f(u)+g,\ \ u\big|_{x=\pm1}=0,\ \ u\big|_{t=0}=u_0,
\end{equation}
where the exponent $p>0$, the external force $g\in L^2(-1,1)$ and the destabilizing nonlinearity is growing not faster than $|u|^{q+1}$ for some other exponent $q>0$, namely, $f\in C^1(\R)$ and
\begin{equation}\label{0.f}
|f'(u)|\le C(1+|u|^q).
\end{equation}
The typical nonlinearities have the form $f(u)=u|u|^q$ and $f(u)=|u|^q$ although other choices are also allowed. Note also that the convective term of the form $\partial_x(u|u|^p)$ is also allowed here and can be treated even in a simpler way.
\par
As we have already mentioned, equation \eqref{0.eq1} has been studied in \cite{LePaSaSt} and \cite{ChLeSa}, see also \cite{SoWe1,SoWe2,Te1,Te2} and more or less complete answer on the blow up/global solvability question has been obtained: blow up is prevented if $p\ge q$ and occurs at least for some solutions of this equation if $p<q$ (e.g., for the non-linearity $f(u)=|u|^{q+1}$).
\par
In the present paper, we present an alternative/simplified method of proving this result which based on the weighted energy estimates (multiplication of the equation on $ue^{-Lx}$ or $u_+e^{-Lx}-u_-e^{Lx}$ where $L$ is a properly chosen parameter) rather than maximum principle and which will be used throughout of the paper. In addition, in Section \ref{s1}, we apply this method to some model 2D Burger's type system without the maximum principle.
\par
In section \ref{s2}, we study the 1D forth order parabolic equations with convective terms. The first considered example is the generalized Kuramoto-Sivashinsky equation:
\begin{equation}\label{0.eq2}
\partial_t u+\partial_x^4 u-\lambda\partial^2_x u+\partial_x(u|u|^p)=f(u)+g,\ u\big|_{x=\pm1}=\partial_x^2u\big|_{x=\pm1}=0,\ u\big|_{t=0}=u_0,
\end{equation}
where $\lambda\in\R$ and the parameters $p,q$, the external force $g$ and the nonlinearity $f$ are the same as in the case of Burger's equation. The proved result for this equation is also almost the same as for the Burger's equation: the blow up is prevented if $p\ge q$ and remains possible if $p<q$. The only difference is that since we do not have not $L^s$-estimates for $s\ne2$, we have to require extra restriction $p\le6$ to obtain the global well-posedness and regularity in the case $p\ge q$ (actually we do not know whether or not this extra restriction is essential).
\par
The second considered example is the convective Cahn-Hilliard equation:
\begin{equation}\label{0.eq3}
\partial_t u+\partial_x^2(\partial^2_x u+f(u))+\partial_x(u|u|^p)=f(u)+g,\ u\big|_{x=\pm1}=\partial_x^2u\big|_{x=\pm1}=0,\ u\big|_{t=0}=u_0,
\end{equation}
where the parameters $p,q$, the external force $g$ and the nonlinearity $f$ are the same as in the case of Burger's equation. This equation looks more complicated since the destabilizing nonlinearity $\partial_x^2f(u)$ is stronger due to the presence of the second derivative. The particular case of this equation with the convective term $\partial_x(u^2)$ and periodic boundary conditions has been studied in our previous paper \cite{EdKaZe}, see also \cite{EdKa} and references therein. In particular, the following partial results on global solvability/blow up have been obtained there: blow up is prevented if $q<\frac49$ and is occurred at least for some solutions if $q\ge2$. So, the behavior of solutions remains unclear if $\frac49\le q<2$. The results proved in this paper using the different method of weighted estimates are slightly better, but still not optimal, namely, the global well-posedness is verified if $q\le p/2$ (and $p\le6$) and the blow up is proved if $q>p+1$. Thus, the behavior of solutions is not clear if $p/2< q\le p+1$.
\par
Finally, in Section \ref{s3}, we study the 3rd order KdV type equation:
\begin{equation}\label{0.eq4}
\partial_t u+\partial_x^3u=\partial_x(u|u|^p)+f(u)+g,\ \ u\big|_{x=\pm1}=\partial_xu\big|_{x=1}=0,\ u\big|_{t=0}=u_0,
\end{equation}
where the parameters $p,q$, the external force $g$ and the nonlinearity $f$ are the same as in the case of Burger's equation. It worth emphasizing that, in contrast to the case of KdV equations on the whole line or with periodic boundary conditions where the corresponding equations are conservative and even completely integrable, they are {\it dissipative} if the Dirichlet boundary conditions are posed. Although the corresponding problem is not parabolic, it possesses the smoothing property on a finite time interval and the linear part generates a $C^\infty$-semigroup, see e.g., \cite{KdV} and references therein. Moreover, the analytic properties of this equation is similar to Burger's type equations (of course, up to the maximum principle). This essentially simplifies the proof of local well-posedness in comparison to the conservative case. As we show, similar to the Burger's type equations, the blow up is prevented if $p\ge q$ (under the extra restriction $p\le2$ which is posed in order to prove the global well-posedness in the phase space $L^2(-1,1)$) and in the case $p<q$ blow up remains possible and is occurred at least for some solutions in the case where $f(u)=|u|^{q+1}$.

\section{Burger's type equations}\label{s1}
The aim of this section is to illustrate the impact of convective terms to reaction-diffusion equations and systems. We start with the simplest model example of 1D Burgers equation:
\begin{equation}\label{1.eq1}
\partial_t u+\partial_x(u^2)=\partial_x^2 u+k u^2+g,\ \  u\big|_{x=\pm1}=0, \ \  u\big|_{t=0}=u_0,
\end{equation}
considered on the interval $x\in(-1,1)$ and endowed by the Dirichlet boundary conditions.
It is well-known that without the convective term $\partial_x(u^2)$ this equation possesses solutions blowing up in finite time if $k\ne0$. Moreover, as it is shown in \cite{LePaSaSt} (see also references therein), the presence of the convective term prevents the solutions to blow up and the global existence of solutions holds for any $k\in\mathbb R$. We present below an alternative/simplified proof of this fact which can be extended to much wider class of equations. Namely, the following result holds.
\begin{theorem}\label{Th1.bur} Let $k\in\R$ and $g\in L^2(-1,1)$. Then, for any $u_0\in L^2(-1,1)$, equation \eqref{1.eq1} possesses a unique solution $u(t)$ defined globally in time $t\ge0$ and this solution satisfies the following dissipative estimate:
\begin{equation}\label{1.estdiss-bur}
\|u(t)\|_{L^2}^2\le C\|u_0\|_{L^2}^2e^{-\alpha t}+C(\|g\|^2_{L^2}+1),
\end{equation}
 where the positive constants $C$ and $\alpha$ are independent of $u_0$ and $t$.
 \end{theorem}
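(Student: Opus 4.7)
The plan is to derive the a priori $L^2$-dissipative estimate \eqref{1.estdiss-bur} via the weighted energy method advertised in the introduction, and to combine it with a standard local well-posedness argument (Galerkin approximation, with uniqueness following from a routine energy estimate on the difference of two $L^2$-solutions, once parabolic smoothing is used to put $u(t)$ into $L^\infty$ for $t>0$). Once the a priori $L^2$-bound is in hand, global existence follows by the usual continuation argument: if a maximal solution ceased to exist at some finite $T_{\max}$, its $L^2$-norm would have to blow up there, contradicting the bound.

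For the weighted estimate, I would multiply \eqref{1.eq1} by the test function
\begin{equation*}
\varphi \,:=\, u_+ e^{-Lx} - u_-\, e^{Lx}, \qquad u_\pm := \max(\pm u,\,0),
\end{equation*}
and integrate over $(-1,1)$, with $L>0$ a parameter to be chosen. Because $u_+ u_- = 0$ a.e., the time derivative produces $\tfrac12\tfrac{d}{dt}\!\int(u_+^2 e^{-Lx} + u_-^2 e^{Lx})\,dx$. One integration by parts in the convective term, using the identities $2u\,\p_x u\cdot u_\pm = \pm\tfrac23 \p_x(u_\pm^3)$ together with the Dirichlet data, produces the ``good'' cubic contribution $\tfrac{2L}{3}\!\int(u_+^3 e^{-Lx} + u_-^3 e^{Lx})\,dx$ on the left, while the destabilizing $ku^2$ paired with $\varphi$ yields $k\!\int u_+^3 e^{-Lx}\,dx - k\!\int u_-^3 e^{Lx}\,dx$ on the right. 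Choosing $L > \tfrac{3|k|}{2}$ strictly makes both net coefficients $\tfrac{2L}{3}\mp k$ positive, so the convection dominates the destabilizing term and leaves a strictly coercive cubic surplus of the form $\delta\!\int(u_+^3 e^{-Lx} + u_-^3 e^{Lx})\,dx$.

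The diffusion, after two integrations by parts, yields the dissipation $\int(|\p_x u_+|^2 e^{-Lx}+|\p_x u_-|^2 e^{Lx})\,dx$ together with a ``bad'' lower-order contribution $\tfrac{L^2}{2}\!\int(u_+^2 e^{-Lx}+u_-^2 e^{Lx})\,dx$ coming from $\phi''=L^2\phi$. This is the one subtle feature of the scheme: the weighted Poincar\'e constant on $(-1,1)$ with Dirichlet data is $\tfrac{L^2+\pi^2}{4}$, which does not dominate $\tfrac{L^2}{2}$ once $L$ is forced large. I would absorb the bad term into the cubic surplus by the pointwise Young inequality $u_\pm^2 \le \epsilon\, u_\pm^3 + C_\epsilon$, choosing $\epsilon$ small enough that only a fixed fraction of $\delta\!\int u_\pm^3 e^{\mp Lx}dx$ is consumed. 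Handling $\int g\varphi\,dx$ by Cauchy--Schwarz and Young (absorbing a small multiple of $\|u\|_{L^2}^2$ into the remaining $\pi^2$-Poincar\'e piece of the dissipation), Gronwall's lemma gives exponential decay plus a forcing bound for $\int(u_+^2 e^{-Lx}+u_-^2 e^{Lx})\,dx$. Since $e^{-L}\le e^{\mp Lx}\le e^L$ on $(-1,1)$, this weighted quantity is equivalent to $\|u(t)\|_{L^2}^2$, and \eqref{1.estdiss-bur} follows.

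The principal obstacle — and simultaneously the point that motivates the choice of test function — is the interaction of the second derivative of the exponential weight with the dissipation: the resulting $\tfrac{L^2}{2}\|u_\pm\|$-type term cannot be controlled by the linear Poincar\'e constant alone when $L$ is large, and it is precisely the cubic surplus produced by the convection/destabilization imbalance that rescues the estimate, replacing the usual reliance on the maximum principle.
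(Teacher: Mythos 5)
Your proposal is correct and follows essentially the same route as the paper's proof: the same weighted test function $u_+e^{-Lx}-u_-e^{Lx}$, the same cubic surplus $\tfrac{2L}{3}\mp k$ produced by convection versus the destabilizing term, absorption of the $\tfrac{L^2}{2}\|u_\pm\|^2$ contribution from the weight via the pointwise Young inequality $u_\pm^2\le\epsilon u_\pm^3+C_\epsilon$, and Gronwall. If anything, your bookkeeping (the factor $\tfrac{2L}{3}$, the condition $L>\tfrac{3|k|}{2}$, and the observation that the weighted Poincar\'e constant alone cannot absorb the $\tfrac{L^2}{2}$ term) is more careful than the paper's, which states the cubic coefficient as $(k-L)$ and fixes $L=k+1$.
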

 \begin{proof} The local well-posedness of problem \eqref{1.eq1} is straightforward, so we concentrate only on the derivation of dissipative estimate \eqref{1.estdiss-bur}. To this
end, we multiply equation \eqref{1.eq1} by by $u_+(t)e^{-Lx}-u_-(t)e^{Lx}$ where $u_+=\max\{u,0\}$, $u_-=u-u_+$ and $L$ is sufficiently large positive constant which will be specified below. Then, after the standard transformations, we end up with
\begin{multline}\label{1.huge}
\frac12\frac d{dt}\(\|u_+\|^2_{L^2_{e^{-Lx}}}+\|u_-\|^2_{L^2_{e^{Lx}}}\)+\|\Nx u_+\|^2_{L^2_{e^{-Lx}}}+\|\Nx u_-\|^2_{L^2_{e^{Lx}}}=\\=
\frac12L^2\(\|u_+\|^2_{L^2_{e^{-Lx}}}+\|u_-\|^2_{L^2_{e^{Lx}}}\)+\\+(k- L)\(\|u_+\|^{3}_{L^{3}_{e^{-Lx}}}+\|u_-\|^{3}_{L^{3}_{e^{Lx}}}\)+(g,u_+(t)e^{-Lx}-u_-(t)e^{Lx}),
\end{multline}
where we denote by $L^p_\varphi$ the weighted Lebesgue space $L^p$ with the norm
$$
\|u\|^p_{L^p_\varphi}:=\int_{-1}^1|u(x)|^p\varphi(x)\,dx.
$$
Crucial for our purposes is the fact that the weighted norms $\|u\|_{L^p_{e^{Lx}}}$ are equivalent to the non-weighted ones when the underlying domain is bounded.
\par
Fixing $L:=k+1$ in \eqref{1.huge} and using the Young inequality, we have
\begin{equation}
\frac12\frac d{dt}\(\|u_+\|^2_{L^2_{e^{-Lx}}}+\|u_-\|^2_{L^2_{e^{Lx}}}\)+
\(\|u_+\|^2_{L^2_{e^{-Lx}}}+\|u_-\|^2_{L^2_{e^{Lx}}}\)\le C(\|g\|^2_{L^2}+1).
\end{equation}
Applying the Gronwall inequality to this relation, we end up with the desired estimate \eqref{1.estdiss-bur} and finish the proof of the theorem.
 \end{proof}
The above result can be easily generalized to the  problem
\begin{equation}\label{1.eq2}
\partial_t u+\partial_x(|u|^{p+1})=\partial_x^2 u+f(u)+g,\ \ u\big|_{t=0}=u_0,
\end{equation}
where the nonlinear function $f$ satisfies \eqref{0.f} with the restriction $q\le p$.
Indeed, repeating word by word the derivation of estimate \eqref{1.estdiss-bur}, we end up with the same $L^2$-dissipative estimate for the solutions of equation \eqref{1.eq2}. However, when the exponent $q$ is large enough, the only dissipative estimate in $L^2$ is not sufficient to prevent blow up in higher norms and verify the global existence and dissipativity of {\it smooth} solutions. To this end, we need the dissipative control of $L^s$-norm
for some $s\ge s(p)<\infty$ (actually, it is not difficult to show that $s(p)=p$). Namely, the following result holds.
\begin{theorem}\label{Th1.bur1} Let $q\le p$, $p>0$ and  $u(t)$ be a sufficiently regular solution of \eqref{1.eq2}. Then,  for every $s\ge2$, the following estimate holds:
\begin{equation}\label{1.est-ls}
\|u(t)\|_{L^s}^s+\int_{t}^{t+1}\|\partial_x(u^{s/2}(\tau))\|^2_{L^2}\,d\tau\le C_s\|u_0\|^s_{L^s}e^{-\alpha t}+C_s(\|g\|_{L^2}^s+1),
\end{equation}
where $\alpha>0$ and the constant $C_s$ depends on $s$, but is independent of $u_0$ and $t$.
\end{theorem}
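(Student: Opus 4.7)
The plan is to extend the weighted-multiplier argument of Theorem \ref{Th1.bur} from $s=2$ to all $s\ge2$. I would multiply \eqref{1.eq2} by
$$M:=u_+^{s-1}e^{-Lx}-(-u_-)^{s-1}e^{Lx},$$
where $L=L(s,p,q)>0$ is a large parameter to be fixed later. As in the $L^2$ case, the exponential weights are sign-adapted so that integration by parts in $\int\partial_x(|u|^{p+1})M\,dx$, together with the Dirichlet boundary conditions, produces a \emph{positive} dissipation
$$\frac{L(p+1)}{p+s}\bigl[\|u_+\|_{L^{p+s}_{e^{-Lx}}}^{p+s}+\|(-u_-)\|_{L^{p+s}_{e^{Lx}}}^{p+s}\bigr],$$
whose coefficient grows linearly with $L$ and whose power $p+s$ strictly dominates the power $q+s$ coming from $\int f(u)M\,dx$, since $q\le p$. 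The time term $\int\partial_tu\cdot M\,dx$ gives $\tfrac1s\tfrac{d}{dt}\bigl[\|u_+\|_{L^s_{e^{-Lx}}}^s+\|(-u_-)\|_{L^s_{e^{Lx}}}^s\bigr]$, and $-\int\partial_x^2u\cdot M\,dx$ yields a positive diffusion dissipation proportional to $\|\partial_x u_+^{s/2}\|_{L^2_{e^{-Lx}}}^2+\|\partial_x(-u_-)^{s/2}\|_{L^2_{e^{Lx}}}^2$ together with a lower-order term of order $L^2/s$ times the weighted $L^s$-norms, produced by differentiating the weight twice.

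The bad contributions are absorbed as follows. Since $|f(u)M|\le C(1+|u|^{q+1})|u|^{s-1}$ and $q\le p$, Young's inequality gives $|u|^{q+s}\le\varepsilon|u|^{p+s}+C_\varepsilon$, so $\int f(u)M\,dx$ is absorbed into a small fraction of the convective $L^{p+s}$-dissipation once $L$ is sufficiently large (depending on the constant in \eqref{0.f}). The weight-induced $L^s$ lower-order term is absorbed into the same convective dissipation via H\"older followed by Young on $(-1,1)$. For the forcing term I write $|\int gM\,dx|\le C\|g\|_{L^2}\|u\|_{L^{2(s-1)}}^{s-1}$, apply H\"older to bound $\|u\|_{L^{2(s-1)}}^{s-1}\le\|u\|_{L^\infty}^{(s-2)/2}\|u\|_{L^s}^{s/2}$, and use the 1D Sobolev embedding applied to $v=|u|^{s/2}\in H_0^1(-1,1)$ to obtain $\|u\|_{L^\infty}^s\le CZ$ with $Z:=\|u\|_{L^s}^s+\|\partial_x|u|^{s/2}\|_{L^2}^2$; altogether $|\int gM\,dx|\le C\|g\|_{L^2}Z^{(s-1)/s}$, and Young's inequality with conjugate exponents $s$ and $s/(s-1)$ produces the desired $\|g\|_{L^2}^s$ plus an arbitrarily small multiple of $Z$ that is absorbed on the left.

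Finally, applying the Poincar\'e inequality to $u_\pm^{s/2}\in H_0^1(-1,1)$ converts a fraction of the diffusion dissipation into a coercive $\alpha\|u\|_{L^s}^s$ on the left, producing a differential inequality of the form
$$\frac{d}{dt}Y(t)+\alpha Y(t)+\beta\|\partial_x|u|^{s/2}(t)\|_{L^2}^2\le C_s\bigl(\|g\|_{L^2}^s+1\bigr),$$
with $Y(t)\sim\|u(t)\|_{L^s}^s$, from which \eqref{1.est-ls} follows by Gronwall's lemma and integration on $[t,t+1]$. In my view the main obstacle is recovering precisely $\|g\|_{L^2}^s$ rather than the stronger $\|g\|_{L^s}^s$ that a crude H\"older bound would demand: this is what forces the Sobolev-interpolation step above, and it is here that the $H^1$-dissipation of $|u|^{s/2}$ is genuinely used rather than merely recorded. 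A secondary technicality is the justification of the multiplier $M$ for non-integer $s\in(2,3)$, where $t\mapsto t_\pm^{s-1}$ is only $C^1$; this is standard, handled by approximating by smooth monotone truncations and passing to the limit in the energy identity.
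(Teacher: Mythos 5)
Your proof is correct and follows essentially the same route as the paper: the same sign-adapted weighted multiplier $u_+^{s-1}e^{-Lx}-(-u_-)^{s-1}e^{Lx}$, the same absorption of the destabilizing term and of the weight-induced lower-order term into the convective $L^{p+s}$-dissipation using $q\le p$ and $L$ large, and the same control of the forcing term via the 1D embedding $H^1_0\subset L^\infty$ applied to $|u|^{s/2}$, followed by Gronwall. The only immaterial differences are that you extract the coercive $\alpha\|u\|_{L^s}^s$ from the diffusion term via Poincar\'e rather than from the convective dissipation via Young, and your H\"older treatment of $\int gM\,dx$ through $\|u\|_{L^{2(s-1)}}$ is a slightly sharper version of the paper's direct bound $C_L\|g\|_{L^2}\|u\|_{L^\infty}^{s-1}$.
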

\begin{proof} We multiply equation \eqref{1.eq2} by $u_+(t)^{s-1}e^{-Lx}-u_-(t)^{s-1}e^{Lx}$ and integrate over  the interval $(-1,1)$. Then,  similarly  to \eqref{1.huge}, but using \eqref{0.f} in order to estimate the term with the non-linearity $f$, we have
\begin{multline}\label{1.huge1}
\frac1s\frac d{dt}\(\|u_+\|^s_{L^s_{e^{-Lx}}}+\|u_-\|^s_{L^s_{e^{Lx}}}\)+
\frac{4(s-1)}{s^2}\((|\partial_x(u_+^{s/2})|^2,e^{-Lx})+(|\partial_x(u_-^{s/2})|^2,e^{Lx})\)\le\\\le
\frac{L^2}{s}\(\|u_+\|^{s}_{L^{s}_{e^{-Lx}}}+\|u_-\|^{s}_{L^s_{e^{Lx}}}\)+
(g,u_+^{s-1}e^{-Lx}-u_-^{s-1}e^{Lx})+\\+
C\(\|u_+\|^{s+q}_{L^{s+q}_{e^{-Lx}}}+\|u_-\|^{s+q-}_{L^{s+q}_{e^{Lx}}}+1\)-
L\(\|u_+\|^{s+p}_{L^{s+p}_{e^{-Lx}}}+\|u_-\|^{s+p}_{L^{s+p}_{e^{Lx}}}\).
\end{multline}
Using the embedding $H^1\subset L^\infty$ (for the 1D case), we estimate the term containing the external forces $g$ as follows:
\begin{multline}\label{1.ext}
|(g,u_+^{s-1}e^{-Lx}-u_-^{s-1}e^{Lx})|\le C_L\|g\|_{L^2}\|u\|_{L^\infty}^{s-1}\le\\\le C_L\|g\|^s_{L^2}+
\frac{2(s-1)}{s^2}\((|\partial_x(u_+^{s/2})|^2,e^{-Lx})+(|\partial_x(u_-^{s/2})|^2,e^{Lx})\).
\end{multline}
Using now the Young inequality and together with the facts that $p\ge q$ and $p>1$ and fixing $L>C$, we end up with the differential inequality
\begin{multline*}
\frac d{dt}\(\|u_+\|^s_{L^s_{e^{-Lx}}}+\|u_-\|^s_{L^s_{e^{Lx}}}\)+
\frac{2(s-1)}{s}\((|\partial_x(u_+^{s/2})|^2,e^{-Lx})+(|\partial_x(u_-^{s/2})|^2,e^{Lx})\)+\\+\alpha \(\|u_+\|^s_{L^s_{e^{-Lx}}}+\|u_-\|^s_{L^s_{e^{Lx}}}\)\le C_s(\|g\|^s_{L^s}+1).
\end{multline*}
The Gronwall inequality applied to this relation gives the desired dissipative estimate and finishes the proof of the theorem.
\end{proof}
\begin{corollary}\label{Cor1.well} Under the assumptions of Theorem \ref{Th1.bur1} problem \eqref{1.eq2} is globally well-posed and dissipative in, say, $H^1_0(-1,1)$.
\end{corollary}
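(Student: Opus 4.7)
The plan is to derive an a priori dissipative estimate for $\|u(t)\|_{H^1_0}$ which, combined with standard local well-posedness in $H^1_0(-1,1)$ and the usual continuation alternative, yields the corollary. Local well-posedness itself is essentially routine in one space dimension: since $H^1_0(-1,1) \hookrightarrow L^\infty(-1,1)$, both nonlinearities $|u|^{p+1}$ and $f(u)$ are locally Lipschitz on bounded sets of $H^1_0$, and a contraction/Galerkin argument produces a unique maximal solution $u \in C([0,T_{\max}); H^1_0) \cap L^2_{loc}([0,T_{\max}); H^2 \cap H^1_0)$ satisfying the blow-up alternative $T_{\max}<\infty \Rightarrow \limsup_{t\to T_{\max}^-}\|u(t)\|_{H^1}=\infty$.

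The substantive step is the $H^1$-estimate. I would multiply \eqref{1.eq2} by $-\partial_x^2 u$ and integrate over $(-1,1)$. The Dirichlet boundary conditions let me integrate by parts in the $f$-term, giving $\int f(u)(-\partial_x^2 u)\,dx = \int f'(u)|\partial_x u|^2 \,dx$, which together with $|f'(u)|\le C(1+|u|^q)$ and Cauchy--Schwarz plus Young on the convective term $\int\partial_x(|u|^{p+1})\partial_x^2 u$ and the forcing term $\int g\,\partial_x^2 u$ yields, after absorbing two $\tfrac14\|\partial_x^2 u\|^2$ contributions into the left-hand side,
\[
\frac{d}{dt}\|\partial_x u\|_{L^2}^2 + \|\partial_x^2 u\|_{L^2}^2 \;\le\; C\bigl(1+\|u\|_{L^\infty}^{2p}\bigr)\|\partial_x u\|_{L^2}^2 + C\bigl(\|g\|_{L^2}^2+1\bigr),
\]
where the hypothesis $q\le p$ is used to absorb the $\|u\|_{L^\infty}^q\|\partial_x u\|^2$ contribution into the $\|u\|_{L^\infty}^{2p}\|\partial_x u\|^2$ contribution via Young's inequality.

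To make this inequality usable, I need $L^1_t$-control of the coefficient $\|u(t)\|_{L^\infty}^{2p}$. This is supplied by Theorem \ref{Th1.bur1}: taking $s$ sufficiently large (say $s=4p$) and applying the one-dimensional embedding $H^1(-1,1)\hookrightarrow L^\infty(-1,1)$ to $|u|^{s/2}$, one has
\[
\|u\|_{L^\infty}^{s} \;\le\; C\|u\|_{L^s}^{s} + C\|\partial_x(|u|^{s/2})\|_{L^2}^2,
\]
so \eqref{1.est-ls} gives $\int_t^{t+1}\|u(\tau)\|_{L^\infty}^{2p}\,d\tau \le C$ uniformly in $t$, with dissipative dependence on the initial data. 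Coupled with the $L^2$-dissipative bound (Theorem \ref{Th1.bur1} with $s=2$) that supplies $\int_t^{t+1}\|\partial_x u\|_{L^2}^2\,d\tau\le C$, the uniform Gronwall lemma applied to the inequality above yields
\[
\|\partial_x u(t)\|_{L^2}^2 \;\le\; Q(\|u_0\|_{H^1})\,e^{-\alpha t} + C\bigl(\|g\|_{L^2}^2+1\bigr)
\]
for a monotone function $Q$ and some $\alpha>0$, which is the desired $H^1$-dissipative estimate and in particular rules out blow-up, forcing $T_{\max}=+\infty$.

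The main obstacle is that the integrated-by-parts term $\int f'(u)|\partial_x u|^2$ is genuinely destabilizing: it is not sign-definite and its coefficient $f'(u)$ cannot be bounded pointwise. One must therefore trade the absence of a pointwise bound for time-integrability of $\|u\|_{L^\infty}$ through the higher $L^s$-estimates of Theorem \ref{Th1.bur1}, whose availability is precisely the place where the hypothesis $q\le p$ is consumed. Everything else (local existence, continuation, the mechanical Young/Cauchy--Schwarz estimates, and the uniform Gronwall step) is routine.
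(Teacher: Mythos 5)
Your argument is correct and follows essentially the route the paper merely sketches: the paper's proof is a two-line remark deferring to the $L^s$-estimate \eqref{1.est-ls} and standard parabolic bootstrapping, which is precisely the $H^1$ energy inequality plus uniform Gronwall (with the coefficient $\|u\|_{L^\infty}^{2p}$ made time-integrable via Theorem \ref{Th1.bur1} for large $s$) that you carry out in detail. The only slip is the identity $\int f(u)(-\partial_x^2u)\,dx=\int f'(u)|\partial_x u|^2\,dx$, which drops the boundary term $-f(0)\left[\partial_x u\right]_{x=-1}^{x=1}$ when $f(0)\neq0$; either bound it by $C|f(0)|\,\|u\|_{H^1}^{1/2}\|u\|_{H^2}^{1/2}$ and absorb it, or avoid the integration by parts altogether by estimating $\left|\int f(u)\,\partial_x^2u\,dx\right|\le\|f(u)\|_{L^2}\|\partial_x^2u\|_{L^2}$ directly.
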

Indeed, the local well-posedness of this problem in $H^1_0$ is straightforward and the global solvability follows in a standard way from the dissipative estimate \eqref{1.est-ls} with $s\ge p-1$ and the parabolic smoothing property, see e.g., \cite{LaSoUr}, so we left details to the reader.
\par
Let us now consider the case $q>p$. In this case, as  it is shown in the next theorem, the convective term is not  strong enough to prevent finite time blow up  of solutions.

\begin{theorem}\label{Th1.blowup} Let $q>p$, $q>1$ and $k\ne0$. Then, there exists a smooth initial data $u_0$ such that the corresponding solution $u(t)$ of equation \eqref{1.eq2} with $f(u)=|u|^{q+1}$ blows up in finite time.
\end{theorem}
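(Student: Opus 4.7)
My plan is to apply Kaplan's eigenfunction method, with a test function vanishing to high enough order at the endpoints that the convective contribution can be absorbed by the destabilizing source $|u|^{q+1}$.

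I would first reduce to $g\equiv0$ and smooth $u_0\ge0$; since $|u|^{q+1}\ge0$, the maximum principle yields $u(x,t)\ge0$ throughout the classical lifespan, so $|u|^{p+1}=u^{p+1}$ and $|u|^{q+1}=u^{q+1}$. (This one-off use of the maximum principle is fully consistent with the paper's general philosophy of avoiding it, since here it only serves to fix a sign.) Let $\phi_1(x)=\cos(\pi x/2)$ denote the first Dirichlet eigenfunction of $-\partial_x^2$ on $(-1,1)$ with eigenvalue $\lambda_1=\pi^2/4$, and set $\phi=\phi_1^\alpha$ with $\alpha\ge 2$ chosen strictly larger than $(p+1)/(q-p)$; such $\alpha$ exists precisely because $q>p$. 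Define
\[F(t):=\int_{-1}^{1}u(x,t)\,\phi(x)\,dx\ge 0.\]
Testing the equation against $\phi$ and integrating by parts twice, using $u(\pm1)=\phi(\pm1)=0$, gives
\[F'(t)=\int u\,\phi''\,dx+\int u^{p+1}\,\phi'\,dx+\int u^{q+1}\,\phi\,dx.\]

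For the diffusion term, the identity $\phi''=\alpha(\alpha-1)\phi_1^{\alpha-2}(\phi_1')^2-\alpha\lambda_1\phi$ and the nonnegativity of $u$ and (for $\alpha\ge2$) of the first summand give $\int u\,\phi''\ge-\alpha\lambda_1 F$. For the convective term, $|\phi'|\le C\phi_1^{\alpha-1}$, and Hölder's inequality with exponents $(q+1)/(p+1)$ and $(q+1)/(q-p)$ yields
\[\Big|\int u^{p+1}\,\phi'\,dx\Big|\le C\Big(\int u^{q+1}\,\phi\,dx\Big)^{\frac{p+1}{q+1}}\Big(\int\phi_1^{\alpha-\frac{q+1}{q-p}}\,dx\Big)^{\frac{q-p}{q+1}}.\]
Since $\phi_1\sim(1-|x|)$ near $\pm1$, the second factor is finite iff $\alpha>(p+1)/(q-p)$, which is the condition we arranged; Young's inequality (available because $p+1<q+1$) then absorbs the left-hand side into $\tfrac12\int u^{q+1}\phi\,dx+C$. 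Combined with the Jensen bound $\int u^{q+1}\phi\,dx\ge\|\phi\|_{L^1}^{-q}F^{q+1}$ for the convex function $s\mapsto s^{q+1}$ with probability measure $\phi\,dx/\|\phi\|_{L^1}$, this produces the differential inequality
\[F'(t)\ge c\,F(t)^{q+1}-C_1F(t)-C_2,\]
with explicit positive constants $c,C_1,C_2$ depending only on $p,q,\alpha$.

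Taking $u_0=M\phi_0$ for a fixed smooth nonnegative profile $\phi_0$ satisfying the Dirichlet conditions and $M$ sufficiently large makes $F(0)$ as large as I please; then $cF^{q+1}/2$ dominates $C_1F+C_2$ throughout the classical lifespan, and $F'\ge (c/2)F^{q+1}$ forces $F(t)\to\infty$ in finite time $T^*\lesssim F(0)^{-q}$. Since $F\le\|\phi\|_{L^\infty}\|u\|_{L^1}$, the solution itself must blow up by $T^*$. The main obstacle is matching the diffusion and convection estimates within a single test function: plain $\phi=\phi_1$ gives the clean diffusion contribution $-\lambda_1 F$ but restricts the Hölder step to the suboptimal $q>2p+1$; raising $\phi_1$ to a large power $\alpha$ pushes the integrability threshold down to the sharp $q>p$, at the cost only of an extra nonnegative term in $\phi''$ which is harmless for the lower bound. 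The quantitative exponent count in this Hölder step is what pins down the borderline $q>p$.
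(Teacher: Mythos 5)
Your proposal is, at its core, the same test-function argument the paper uses: multiply the equation by a nonnegative weight vanishing to high order, absorb the convective contribution into $\int|u|^{q+1}$ by H\"older (the exponent count giving exactly the threshold $q>p$), close with Jensen, and integrate the resulting ODE. Your computations are correct: the exponent $\alpha-\frac{q+1}{q-p}$, the integrability condition $\alpha>\frac{p+1}{q-p}$, the Young absorption, the Jensen bound and the ODE $F'\ge cF^{q+1}-C_1F-C_2$ all check out. Two points of divergence are worth recording. First, the paper takes the weight $\varphi(x)=(\eb^2-x^2)^n$ supported on an \emph{interior} subinterval, so the argument is purely local and insensitive to the boundary conditions; your $\phi_1^\alpha$ works equally well here, but the interior weight is what lets the paper reuse the identical proof for the fourth- and third-order equations later on. Second, and more substantively, you invoke the maximum principle to reduce to $u\ge0$ (and to $g\equiv0$), and you use this positivity twice: to discard the nonnegative part of $\phi''$ and to apply Jensen to $s\mapsto s^{q+1}$ on $[0,\infty)$. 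This is a genuine restriction: equation \eqref{1.eq2} carries an arbitrary $g\in L^2(-1,1)$, which need not be nonnegative, and then positivity of $u$ is not preserved, so the reduction is not available; it also runs against the paper's stated aim of dispensing with the maximum principle. The paper never fixes a sign: the term $\int u\varphi''$ is estimated by H\"older against $\int|u|^{q+1}\varphi$ exactly as the convective term is (at the cost of a slightly larger power $n$), and Jensen is applied in the form $\left|\int u\varphi\right|^{q+1}\le C\int|u|^{q+1}\varphi$ for the convex function $s\mapsto|s|^{q+1}$. If you replace your two uses of positivity by these two estimates, the maximum principle, the sign assumption on $u_0$, and the reduction $g\equiv0$ all become unnecessary, and your proof covers the stated generality.
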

\begin{proof} Without loss of generality we may assume that $k>0$ (otherwise, it is enough to replace $u$ by $-u$. Let us introduce a weight function $\varphi(x)=(\eb^2-x^2)^n$, where $\eb>0$ and $n>0$ is a sufficiently large number which will be specified below, multiply equation \eqref{1.eq2} by $\varphi$ and integrate over $x\in(-\eb,\eb)$. Then, after the direct calculations, we get
\begin{equation}\label{1.dif}
\frac d{dt}\int_{-\eb}^\eb u\varphi\,dx=\int_{-\eb}^\eb|u|^{q+1}\varphi\,dx-\int_{-\eb}^\eb|u|^{p+1}\varphi'\,dx+\int_{-\eb}^\eb u\varphi''\,dx+\int_{-\eb}^\eb g\varphi\,dx.
\end{equation}
Since $q>p$, by the H\"older inequality, we have
$$
\Big|\int_{-\eb}^\eb|u|^{p+1}\varphi'\,dx\Big|\le\frac 14\int_{-\eb}^\eb|u|^{q+1}\varphi\,dx+
C\int_{-\eb}^\eb|\varphi'|^{\frac {q+1}{q-p}}\varphi^{\frac{-1-p}{q-p}}\,dx.
$$
Note that, for all $n\ge \frac{q+1}{q-p}$, the second integrand in the right hand side has no singularities at $x=\pm\eb$ and can be majorated by a constant. Analogously,
\begin{equation}\label{1.2d}
\Big|\int_{-\eb}^\eb u\varphi''\,dx\Big|\le \frac 14\int_{-\eb}^\eb|u|^{q+1}\varphi\,dx+C\int_{-\eb}^\eb|\varphi''|^{\frac {q+1}{q}}\varphi^{\frac{-1}{q}}\,dx
\end{equation}
and the last integral in the right-hand side can be majorated by a constant if $n\ge\frac{2(q+1)}{q}$. Finally, the Jensen inequality gives
$$
\Big|\int_{-\eb}^\eb u\varphi\,dx\Big|^{q+1}\le C\int_{-\eb}^\eb|u|^{q+1}\varphi\,dx
$$
and inserting the obtained estimates into the right-hand side of \eqref{1.dif}, we get
$$
\frac d{dt}\(\int_{-\eb}^\eb u\varphi\,dx\)\ge \alpha\Big|\int_{-\eb}^\eb u\varphi\,dx\Big|^{q+1}-C
$$
for some positive $\alpha$ and $C$ which are independent of $u$. The last inequality shows that the solution $u(t)$ indeed blows up in a finite time if
$$
\int_{-\eb}^\eb u_0(x)\varphi(x)\,dx>\frac {C^{1/(q+1)}}{\alpha^{1/(q+1)}}
$$
and the theorem is proved.
\end{proof}
\begin{remark}\label{Rem1.bound} Note that the results on preventing blow up by convective terms, see theorems \ref{Th1.bur} and \ref{Th1.bur1} strongly depend on boundary conditions. Indeed, if we take, say, Neumann boundary conditions for equation \eqref{1.eq1}, the effect will disappear since the spatially homogeneous solutions will blow up no matter how strong the convective term is. On the other hand, the result of Theorem \ref{Th1.blowup} is based on the interior estimates and will hold no matter what the boundary conditions are.
\par
Mention also that the proof of theorems \ref{Th1.bur} and \ref{Th1.bur1} becomes {\it essentially} simpler if the convective term has the form $\partial_{x}(u|u|^{p})$. Indeed, in this case, we need not to use the functions $u_+$ and $u_-$ and may multiply the equation by $u(t)e^{-Lx}$. Then only the (weighted) energetic arguments are used and this allows us to apply the technique to more general equations (e.g., fourth or third order equations where the maximum principle does not hold). We consider these cases in the next sections.
\end{remark}
We conclude this section by considering the model 2D Burgers type equations where the maximum principle does not work any more, but the above arguments will allow us to establish the blow up preventing by convection. Namely, let us consider a system
\begin{equation}\label{1.eq3}
\partial_t u+\operatorname{div}(u\otimes u)=\Delta_x u+f(u)+g,\ \ u\big|_{\partial\Omega}=0
\end{equation}
where $\Omega$ is a bounded smooth domain of $\R^2$ and $u=(u_1,u_2)$ is an unknown vector field.
The convective term here
\begin{equation}
\operatorname{div}(u\otimes u):=\(\begin{matrix} \partial_{x_1}(u_1^2) &\partial_{x_2}(u_1u_2)\\
                                                \partial_{x_1}(u_1u_2) &\partial_{x_2}(u_2^2)\end{matrix}\)
\end{equation}
is the standard convective term for the Navier-Stokes system and it is clearly non-monotone, so the methods based on the maximum principle will not work at least directly.
\par
We also assume that $g\in L^2(\Omega)$ and the non-linearity $f\in C^1(\R^2,\R^2)$ has the form
\begin{equation}\label{f.cond}
f(u)=\(\begin{matrix} k_1u_1^2\\k_2u_2^2\end{matrix}\)+\bar f(u),
\end{equation}
where $|\bar f'(u)|\le C(1+|u|^{1-\eb})$ for some $\eb>0$ and $k_1,k_2\in\R$. The next theorem is the analogue of Theorem \ref{Th1.bur} for this problem.
\begin{theorem}\label{Th1.bur3} Let the nonlinearity $f$ satisfy \eqref{f.cond} and let $u(t)$ be a sufficiently regular solution of equation \eqref{1.eq3}. Then, the following estimate holds:
\begin{equation}\label{1.est-l1}
\|u(t)\|_{L^1}+\int_{t}^{t+1}\|u(s)\|^2_{L^2}\,ds\le C\|u_0\|_{L^1}e^{-\alpha t}+C(\|g\|_{L^2}+1),
\end{equation}
where the positive constants $C$ and $\alpha$ are independent of $u_0$ and $t$.
\end{theorem}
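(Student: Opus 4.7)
The plan is to adapt the weighted $L^1$-type test suggested at the end of Remark~\ref{Rem1.bound} to the present 2D system, choosing weights aligned with the coordinate axes so that the two components decouple. For each $i\in\{1,2\}$ I would test the $u_i$-equation separately against
\[
\operatorname{sgn}(u_{i,+})\,e^{-Lx_i}\qquad\text{and}\qquad -\operatorname{sgn}(u_{i,-})\,e^{Lx_i},
\]
where $u_{i,+}=\max(u_i,0)$, $u_{i,-}=\max(-u_i,0)$, and $L>\max_i|k_i|$ is a large parameter. Each such test produces an inequality for the weighted $L^1$-norm of $u_{i,\pm}$, and the sum of the four resulting norms is equivalent to $\|u\|_{L^1(\Omega)}$ since $\Omega$ is bounded and the exponentials are bounded from above and below on it.

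With this particular choice of weights several sign and cancellation structures work in our favour. The diffusion is controlled via Kato's inequality $\operatorname{sgn}(u_{i,\pm})\Delta u_i\le\Delta u_{i,\pm}$ combined with the Dirichlet condition $u_{i,\pm}|_{\partial\Omega}=0$ and the monotonicity $\partial_n u_{i,\pm}\le 0$ on $\partial\Omega$; since $\Delta(e^{\mp Lx_i})=L^2e^{\mp Lx_i}$, this contributes at most $L^2\int u_{i,\pm}e^{\mp Lx_i}dx$ to the RHS. The diagonal piece $\partial_{x_i}(u_i^2)$ of the convection, via the chain rule $\operatorname{sgn}(u_{i,\pm})\partial_{x_i}(u_i^2)=\partial_{x_i}(u_{i,\pm}^2)$ and one integration by parts, yields the crucial gain $L\int u_{i,\pm}^2 e^{\mp Lx_i}dx$ on the LHS. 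The off-diagonal piece $\partial_{x_j}(u_iu_j)$ with $j\neq i$ disappears entirely: after the chain rule it becomes $\partial_{x_j}(u_{i,\pm}u_j)$, and integrating in $x_j$ against a weight independent of $x_j$ gives no interior contribution, while the boundary contribution vanishes since both $u_{i,\pm}$ and $u_j$ are zero on $\partial\Omega$. Finally the destabilizing reaction contributes $\pm k_i\int u_{i,\pm}^2e^{\mp Lx_i}dx$, which is dominated by the convective gain once $L>\max_i|k_i|$, while the sub-quadratic remainder $|\bar f(u)|\le C(1+|u|^{2-\varepsilon})$ produces, via Young,
\[
\int |\bar f(u)|\,e^{\mp Lx_i}dx\le \delta\|u\|_{L^2}^2+C_\delta.
\]

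Summing the four inequalities, using Young to absorb the $L^2\int u_{i,\pm}e^{\mp Lx_i}dx$ and $\bar f$-terms into the convective gains, and invoking equivalence of the weighted and unweighted norms on the bounded $\Omega$, I obtain
\[
\frac{d}{dt}\mathcal J(u)+c\|u\|_{L^2}^2\le C(\|g\|_{L^2}^2+1),
\]
where $\mathcal J(u)$ is the sum of the four weighted $L^1$-norms and satisfies $\mathcal J(u)\sim \|u\|_{L^1}$. Since $\mathcal J(u)\le C\|u\|_{L^2}$ by Cauchy--Schwarz, this becomes a Bernoulli-type differential inequality $\dot y + c' y^2\le C(\|g\|_{L^2}^2+1)$ for $y=\mathcal J(u)$; a standard Gronwall argument then yields the exponential decay of $\|u(t)\|_{L^1}$ down to the equilibrium level $\sim \|g\|_{L^2}+1$, and time-integrating over unit intervals produces the $L^2$-control in \eqref{1.est-l1}.

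The delicate part is the sign accounting in the first step. One must verify that, after Kato's inequality and the chain rule for $\operatorname{sgn}(u_{i,\pm})$, the convective gain $L\int u_{i,\pm}^2 e^{\mp Lx_i}dx$ really dominates both the destabilizing contribution $\pm k_i u_{i,\pm}^2$ and the $L^2\int u_{i,\pm} e^{\mp Lx_i}dx$ term coming from $\Delta(e^{\mp Lx_i})$, and that the off-diagonal convective terms genuinely vanish rather than surviving with an uncontrolled sign; this is where the homogeneous Dirichlet condition on \emph{both} components is used essentially. Once this bookkeeping is in place, the remainder reduces to standard dissipative Gronwall machinery.
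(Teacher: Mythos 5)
Your proposal is correct and follows essentially the same route as the paper: the same sign-weighted test functions $\operatorname{sgn}(u_{i,\pm})e^{\mp Lx_i}$, Kato's inequality to control the Laplacian, the vanishing of the off-diagonal convective terms by integration by parts in $x_j$, and absorption of the destabilizing quadratic reaction by the convective gain once $L$ exceeds the $|k_i|$. The only (immaterial) deviation is the final ODE step, where you close via a Bernoulli-type inequality $\dot y+c'y^2\le C$ instead of using $\mathcal J(u)\le C\|u\|_{L^2}$ once more through Young to reduce to a linear Gronwall inequality as the paper does; both yield \eqref{1.est-l1}.
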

\begin{proof} We multiply the first and the second equation of \eqref{1.eq3} by $\operatorname{sgn}((u_1)_+)e^{-Lx_1}-\operatorname{sgn}((u_1)_+)e^{Lx_1}$ and $\operatorname{sgn}((u_2)_+)e^{-Lx_2}-\operatorname{sgn}((u_2)_+)e^{Lx_2}$ respectively, take a sum and  integrate over $x\in\Omega$.  Then, thanks to  the Kato inequality (see, e.g., \cite{HiSi}) we have
$$
\int_{\Omega} \Delta u(x)\operatorname{sgn} u_+(x)\,dx\le 0.
$$
Employing this inequality, after  the straightforward transformations, we end up with
\begin{multline}
\frac d{dt}\(\|(u_1)_+\|_{L^1_{e^{-Lx_1}}}+\|(u_1)_-\|_{L^1_{e^{Lx_1}}}+
\|(u_2)_+\|_{L^1_{e^{-Lx_2}}}+\|(u_2)_-\|_{L^1_{e^{Lx_2}}}\)+\\+
(L-|k_1|-|k_2|)\(\|(u_1)_+\|_{L^2_{e^{-Lx_1}}}^2+\|(u_1)_-\|_{L^2_{e^{Lx_1}}}^2+
\|(u_2)_+\|_{L^2_{e^{-Lx_2}}}^2+\|(u_2)_-\|_{L^2_{e^{Lx_2}}}^2\)\le\\\le
L^2\(\|(u_1)_+\|_{L^1_{e^{-Lx_1}}}+\|(u_1)_-\|_{L^1_{e^{Lx_1}}}+
\|(u_2)_+\|_{L^1_{e^{-Lx_2}}}+\|(u_2)_-\|_{L^1_{e^{Lx_2}}}\)+\\+C(1+\|g\|_{L^2}+\|u\|_{L^{2-\eb}}^{2-\eb}).
\end{multline}
Fixing $L>|k_1|+|k_2|$ and applying Young and Gronwall inequality to this relation, we end up with the desired estimate \eqref{1.est-l1} and finish the proof of the theorem.
\end{proof}
\begin{remark}\label{Rem1.blowup} It is not difficult to see that without the convective term, the solutions of \eqref{1.eq3} may blow up in finite time. Also, if we take the nonlinearity $f$ growing faster than quadratically, we may construct the blow up solutions arguing as in Theorem \ref{Th1.blowup}.
\end{remark}
Note, however, that the dissipative estimate \eqref{1.est-l1} is too weak in order to verify that the higher norms of solutions do not blow up in finite time. To overcome this  obstacle, we need either to obtain stronger dissipative estimate (which we actually do not know how to do at the moment) or somehow regularize equations \eqref{1.eq3}. For instance, it can be done in the spirit of the $\alpha$-models for the Navier-Stokes equations (see \cite{FHT}), namely, let us consider the following regularized system:
\begin{equation}\label{1.eq4}
\begin{cases}
\partial_t u_1+\partial_{x_1}(u_1^2)+\partial_{x_2}(u_1v_2)=\Delta u_1+k_1 u_1^2+\bar f_1(u)+g_1,\\
\partial_t u_2+\partial_{x_2}(u_2^2)+\partial_{x_1}(u_2v_1)=\Delta u_2+k_2 u_2^2+\bar f_2(u)+g_2,\\
v=(1-\alpha\Delta)^{-1}u,\ \ u,v\big|_{\partial\Omega}=0,\ \ u\big|_{t=0}=u_0,
\end{cases}
\end{equation}
where $\alpha>0$ is a regularization parameter. Then, on the one hand, repeating word by word the proof of Theorem \ref{Th1.bur3}, we see that the solution \eqref{1.eq4} satisfies the dissipative estimate \eqref{1.est-l1} uniformly with respect to $\alpha$. On the other hand, the dissipative estimates in higher norms can be obtained from \eqref{1.est-l1} using the standard bootstrapping arguments. Indeed, since the key convective nonlinearity in \eqref{1.eq4} is similar to the Navier-Stokes one, it is enough to deduce the dissipative $L^2$ estimate which is done in the following corollary.
\begin{corollary}\label{Cor1.bur-l2} Under the above assumptions, problem \eqref{1.eq4} is globally well-posed in $L^2(\Omega)$ and the following estimate holds:
\begin{equation}\label{1.est-bur4}
\|u(t)\|^2_{L^2}+\int_t^{t+1}\|\nabla_x u(s)\|^2_{L^2}\,ds\le Q(\|u_0\|_{L^2})e^{-\gamma t}+Q(\|g\|_{L^2}),
\end{equation}
where $\gamma>0$ and the monotone function $Q$ is independent of $u_0$ and $t$.
\end{corollary}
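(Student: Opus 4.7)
The plan is to bootstrap from the dissipative $L^1$/integrated $L^2$ estimate proved in Theorem \ref{Th1.bur3} to a pointwise $L^2$/integrated $H^1$ estimate, combining the weighted--energy device used throughout the paper with the smoothing $v=(1-\alpha\Delta)^{-1}u$. Once such an estimate is in hand, global well-posedness of \eqref{1.eq4} in $L^2(\Omega)$ follows routinely: local existence is immediate since the convective nonlinearity is bilinear and $v$ gains two derivatives over $u$, while the a priori $L^2$ bound rules out finite-time blow up.

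Concretely I would multiply the $i$-th equation of \eqref{1.eq4} by $(u_i)_+e^{-Lx_i}-(u_i)_-e^{Lx_i}$ and add, following the proofs of Theorems \ref{Th1.bur} and \ref{Th1.bur3}. Choosing $L$ larger than a constant depending on $|k_1|$ and $|k_2|$, the self-convective terms $\partial_{x_i}(u_i^2)$ absorb the destabilizing reactions $k_iu_i^2$; the subquadratic perturbation $\bar f(u)$ (which satisfies $|\bar f(u)|\le C(1+|u|^{2-\varepsilon})$) and the external force $g$ are handled by Young's inequality and standard 2D Sobolev embeddings.

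The genuinely new ingredient is the cross-convective term. For example, integration by parts in $x_2$ (the weight depending only on $x_1$) gives
\[
\int_\Omega\partial_{x_2}(u_1v_2)(u_1)_+e^{-Lx_1}\,dx=\tfrac12\int_\Omega (u_1)_+^2\,\partial_{x_2}v_2\,e^{-Lx_1}\,dx,
\]
with symmetric contributions from $(u_1)_-$ and from the second equation. To bound these I would use elliptic regularity $\|v\|_{H^2}\le C_\alpha\|u\|_{L^2}$ together with the 2D embedding $H^1\hookrightarrow L^4$, obtaining $\|\nabla v\|_{L^4}\le C_\alpha\|u\|_{L^2}$; the Gagliardo--Nirenberg inequality $\|u_i\|_{L^{8/3}}^2\le C\|u_i\|_{L^2}^{3/2}\|\nabla u_i\|_{L^2}^{1/2}$ and Young's inequality then give a bound of the form $\delta\|\nabla u\|_{L^2}^2+C_{\alpha,\delta}(\|u\|_{L^2}^4+1)$. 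Since the weights $e^{\pm Lx_i}$ are bounded on $\Omega$, the weighted and ordinary norms are equivalent, and absorbing $\delta\|\nabla u\|_{L^2}^2$ on the left produces a differential inequality of the form
\[
\frac{d}{dt}\|u(t)\|_{L^2}^2+\gamma\|\nabla u(t)\|_{L^2}^2\le C_\alpha\|u(t)\|_{L^2}^4+C(\|g\|_{L^2}^2+1).
\]

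Closing the estimate now reduces to the uniform Gronwall lemma: writing the inequality as $y'\le ay+b$ with $y=\|u\|_{L^2}^2$, $a(t)=C_\alpha\|u(t)\|_{L^2}^2$ and $b=C(\|g\|_{L^2}^2+1)$, Theorem \ref{Th1.bur3} supplies the uniform bound on $\int_t^{t+1}a(s)\,ds$ needed to extract pointwise control on $\|u(t+1)\|_{L^2}^2$ of the dissipative form \eqref{1.est-bur4}. The integrated $H^1$ bound then follows by integrating the differential inequality over $[t,t+1]$, and the interval $[0,1]$ is handled by continuity of the local $L^2$ solution. The principal obstacle is the pair of cross terms $\partial_{x_2}(u_1v_2),\partial_{x_1}(u_2v_1)$: in contrast to the Navier--Stokes case they do not cancel in the energy identity, and only the extra derivative on $v$ provided by the $(1-\alpha\Delta)^{-1}$ smoothing allows one to trade regularity and close the estimate. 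This is precisely the reason why the authors had to regularize the original 2D system \eqref{1.eq3}.
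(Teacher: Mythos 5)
Your proposal is correct and follows essentially the same route as the paper: an $L^2$-energy estimate in which the cross-convective terms are integrated by parts and controlled via elliptic regularity for $v=(1-\alpha\Delta)^{-1}u$, leading to the differential inequality $\frac{d}{dt}\|u\|^2_{L^2}+\gamma\|\nabla u\|^2_{L^2}\le C\|u\|^4_{L^2}+C(\|g\|^2_{L^2}+1)$, which is then closed by the uniform-Gronwall/smoothing argument using the integrated $L^2$-control supplied by the $L^1$-dissipative estimate \eqref{1.est-l1}. The only (harmless) deviations are cosmetic: the paper multiplies simply by $u$ (so the self-convective terms vanish and $k_iu_i^3$ is absorbed by interpolation) and bounds $\|\nabla v\|_{L^\infty}$ via $H^2\subset C$, whereas you keep the weighted sign-split multiplier and use $\|\nabla v\|_{L^4}$ with Gagliardo--Nirenberg.
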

\begin{proof}Multiplying equations \eqref{1.eq4} by $u$ and integrate over $x\in\Omega$ after the integration by parts, we get
\begin{equation}\label{1.est-dif1}
\frac12\frac d{dt}\|u\|^2_{L^2}+\|\nabla_x u\|^2_{L^2}=\frac12\((u_1^2,\partial_{x_2}v_2)+(u_2^2,\partial_{x_1}v_1)\)+(f(u),u)+(g,u).
\end{equation}
The first terms on the right-hand side of \eqref{1.est-dif1} can be estimated using the embedding $H^2\subset C$ and the maximal regularity of the Laplace operator:
\begin{equation*}
\frac12\(|(u_1^2,\partial_{x_2}v_2)|+|(u_2^2,\partial_{x_1}v_1)\)|\le\|u\|^2_{L^2}\|\nabla v\|_{L^\infty}\le C\|u\|^2_{L^2}\|\nabla u\|_{L^2}\le \frac14\|\nabla u\|^2+C\|u\|^2_{L^2}.
\end{equation*}
Analogously using that $f(u)$ has at most quadratic growth together with the interpolation inequality, we get
\begin{equation*}
|(f(u),u)|\le C(\|u\|^3_{L^3}+1)\le C(1+\|u\|_{L^2}^2\|\nabla u\|_{L^2})\le \frac14\|\nabla u\|^2_{L^2}+C(\|u\|^4_{L^2}+1).
\end{equation*}
Thus, \eqref{1.est-dif1} reads
\begin{equation}\label{1.est-dif2}
\frac d{dt}\|u\|^2_{L^2}+\|\nabla_x u\|^2_{L^2}\le C\|u\|^4_{L^2}+C(\|g\|^2_{L^2}+1).
\end{equation}
Inequality \eqref{1.est-dif2} together with the dissipative estimate \eqref{1.est-l1} are enough to verify the desired estimate \eqref{1.est-bur4} and finish the proof of the corollary. Indeed, for $t\in[0,1]$, we get \eqref{1.est-bur4} by applying the Gronwall inequality to \eqref{1.est-dif2} and using that the norm $\int_0^1\|u(s)\|^2_{L^2}\,ds$ is under the control. To obtain the estimate for $t\ge1$, we use the smoothing property
$$
\|u(1)\|^2_{L^2}\le Q(\|u(0)\|_{L^1})+Q(\|g\|_{L^2})
$$
which follows again from \eqref{1.est-dif2} by multiplying it by $t$ and applying the Gronwall inequality. From this estimate, we derive  the estimate
$$
\|u(t+1)\|^2_{L^2}\le Q(\|u(t)\|_{L^1})+Q(\|g\|_{L^2})
$$
which together with \eqref{1.est-l1} gives the desired dissipative estimate for $t\ge1$. Thus, the corollary is proved.
\end{proof}

\section{Fourth order equations with convective terms}\label{s2}
In this section, we apply the method of weighted estimates to some classes of fourth order parabolic equations with convective terms. We start with the following generalized Kuramoto-Sivashinsky equation in $\Omega=(-1,1)$:
\begin{equation}\label{2.eq1}
\partial_t u+\partial_x^4u+\lambda\partial_{x}^2u+\partial_x(u|u|^p)=f(u)+g,\ \ u\big|_{x=\pm1}=\partial_x^2 u\big|_{x=\pm1}=0,\ \ u\big|_{t=0}=u_0,
\end{equation}
where $\lambda\in \R$ and $p>0$ are some parameters, $g\in L^2(\Omega)$ is a given external force and $f\in C^1(\R,\R)$ is a given nonlinearity satisfying the growth restriction
\begin{equation}\label{2.f}
|f'(s)|\le C(1+|s|^q), \ \ \forall s\in \R,
\end{equation}
for some $q>0$. The next result gives the analogue of Theorem \ref{Th1.bur} for this equation.
\begin{theorem}\label{Th2.KS} Let the nonlinearity $f$ satisfy assumption \eqref{2.f} for some $q\le p$ and $g\in L^2(\Omega)$. Then, any sufficiently regular solution $u(t)$ of  problem \eqref{2.eq1} possesses the following estimate:
\begin{equation}\label{2.est-dis-KS}
\|u(t)\|^2_{L^2}+\int_t^{t+1}\left(\|\partial_{x}^2 u(s)\|^2_{L^2}+\|u(s)\|^{p+2}_{L^{p+2}}\right)\,ds\le C\|u_0\|_{L^2}^2e^{-\alpha t}+C(\|g\|^2_{L^2}+1),
\end{equation}
where the positive constants $\alpha$ and $C$ are independent of $t$, $u_0$ and $g$.
\end{theorem}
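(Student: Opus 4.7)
The strategy is to extend the weighted energy method of Theorem \ref{Th1.bur} to the fourth-order setting. Because the convective nonlinearity here has the ``clean'' form $\partial_x(u|u|^p)$ highlighted in Remark \ref{Rem1.bound}, no $u_\pm$ splitting is needed: I would simply multiply \eqref{2.eq1} by $u(t)e^{-Lx}$ with $L>0$ chosen large (depending on $\lambda$, $p$, $q$ and the constant in \eqref{2.f}) and integrate over $(-1,1)$. The key identity is that of the convective term, which after one integration by parts and the Dirichlet condition produces the coercive contribution
$$
\int_{-1}^{1}\partial_x(u|u|^p)\,u\,e^{-Lx}\,dx = L\,\frac{p+1}{p+2}\,\|u\|^{p+2}_{L^{p+2}_{e^{-Lx}}}
$$
on the left-hand side, with strength growing linearly in $L$. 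The bi-Laplacian, integrated by parts twice while exploiting both boundary conditions $u=\partial_x^2u=0$ at $x=\pm 1$, yields the leading term $\|\partial_x^2u\|^2_{L^2_{e^{-Lx}}}$ together with lower-order corrections of the form $-2L^2\|\partial_x u\|^2_{L^2_{e^{-Lx}}}$ and $\tfrac12L^4\|u\|^2_{L^2_{e^{-Lx}}}$ plus two residual boundary terms of the form $\pm L(\partial_x u(\pm 1))^2e^{\mp L}$; the $\lambda\partial_x^2 u$ term contributes only $\|\partial_x u\|^2$ and $\|u\|^2$ pieces (with no boundary residue, since $u=0$ at $\pm 1$).

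The destabilizing term $f(u)$ is handled by the pointwise bound $|f(u)u|\le C(1+|u|^{q+2})$ coming from \eqref{2.f}; since $q\le p$, Young's inequality gives $|f(u)u|\le\varepsilon|u|^{p+2}+C_\varepsilon$, which is absorbed into the coercive convective contribution once $L$ is large enough. The external force is controlled by the standard $|(g,ue^{-Lx})|\le\varepsilon\|u\|^2_{L^2}+C_\varepsilon\|g\|^2_{L^2}$. The lower-order $\|\partial_x u\|^2$ pieces (both the $-2L^2$ correction from the bi-Laplacian and the one from the $\lambda$-term) are removed by the interpolation inequality $\|\partial_x u\|_{L^2}^2\le\varepsilon\|\partial_x^2 u\|_{L^2}^2+C_\varepsilon\|u\|_{L^2}^2$, which is available thanks to the Dirichlet condition.

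The main (essentially only) obstacle is the boundary residue from the bi-Laplacian, which has no analogue in the second-order Burgers setting. The term at $x=-1$ appears with a favorable sign on the left-hand side and is simply discarded; the delicate one is the term at $x=1$, which has the negative coefficient $-Le^{-L}$ in front of $(\partial_x u(1))^2$. Here I would invoke the 1D trace estimate $(\partial_x u(1))^2\le C\|u\|_{H^2}\|u\|_{H^1}$ coming from the embedding $H^1\hookrightarrow L^\infty$; the smallness of the prefactor $Le^{-L}$ for $L$ large then allows this term to be absorbed into $\tfrac12\|\partial_x^2u\|^2_{L^2}$ together with a controllable multiple of $\|u\|^2_{L^2}$. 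Collecting all contributions, invoking the Poincar\'e inequality $\|u\|^2_{L^2}\le C\|\partial_x^2u\|^2_{L^2}$ and the equivalence of $\|\cdot\|_{L^2_{e^{-Lx}}}$ with $\|\cdot\|_{L^2}$ on $(-1,1)$, one arrives at a differential inequality of the form
$$
\frac{d}{dt}\|u\|^2_{L^2_{e^{-Lx}}}+\alpha\|u\|^2_{L^2_{e^{-Lx}}}+c\bigl(\|\partial_x^2u\|^2_{L^2}+\|u\|^{p+2}_{L^{p+2}}\bigr)\le C\bigl(\|g\|^2_{L^2}+1\bigr),
$$
from which \eqref{2.est-dis-KS} follows by Gronwall's inequality and integration on $[t,t+1]$.
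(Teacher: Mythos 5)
Your proposal is correct and follows the paper's proof essentially verbatim: multiply by $ue^{-Lx}$, extract the coercive term $\frac{p+1}{p+2}L\|u\|^{p+2}_{L^{p+2}_{e^{-Lx}}}$ from the convective nonlinearity, absorb $(f(u),ue^{-Lx})$ using $q\le p$ and Young's inequality, remove the $\|\partial_x u\|^2$ contributions by interpolation between $H^2$ and $L^2$, and conclude with Gronwall. The one place you go beyond the paper is the boundary residue $-Le^{-L}|\partial_x u(1)|^2+Le^{L}|\partial_x u(-1)|^2$ produced by the weighted integration by parts of $\partial_x^4u$ (the identity \eqref{2.equal} silently drops it); your trace-plus-interpolation treatment of the unfavorable term at $x=1$ is a genuine and needed supplement, with the small caveat that the resulting multiple of $\|u\|^2_{L^2}$ grows with $L$ and so should be absorbed, like the $f$-term, into the convective term $L\|u\|^{p+2}_{L^{p+2}}$ via Young's inequality using $p>0$, rather than via Poincar\'e into $\|\partial_x^2u\|^2_{L^2_{e^{-Lx}}}$, whose available coefficient does not grow with $L$.
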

\begin{proof}We multiply equation \eqref{2.eq1} by $ue^{-Lx}$, integrate by parts and use that
\begin{equation}
(\partial_x(u|u|^p),ue^{-Lx})=\frac{p+1}{p+2}(\partial_x(|u|^{p+2}), e^{-Lx})=\frac{p+1}{p+2}L(|u|^{p+2},e^{-Lx}).
\end{equation}
Then, we get
\begin{multline}\label{2.equal}
\frac12\frac d{dt}\|u\|^2_{L^2_{e^{-Lx}}}+\|\partial_{x}^2u\|_{L^2_{e^{-Lx}}}^2+\frac{L^4+\lambda L^2}2\|u\|^2_{L^2_{e^{-Lx}}}+\\+\frac{p+1}{p+2}L(|u|^{p+2},e^{-Lx})=(L^2+\lambda)\|\partial_x u\|^2_{L^2_{e^{-Lx}}}+(f(u),ue^{-Lx})+(g,ue^{-Lx}).
\end{multline}
Using assumption \eqref{2.f} on the nonlinearity $f$ together with the fact that $p\ge q$, we may fix $L$ in such way that the nonlinear term on the right-hand side of \eqref{2.equal} can be estimated by the convective term on the left-hand side. Using also the interpolation inequality between $H^2$ and $L^2$ and the fact that $p>0$, we finally arrive at
\begin{equation}\label{2.36}
\frac d{dt}\|u\|^2_{L^2_{e^{-Lx}}}+\|u\|^2_{L^2_{e^{-Lx}}}+\alpha(\|\partial_x^2 u\|^2_{L^2}+\|u\|^{p+2}_{L^{p+2}})\le C_L(\|g\|^2_{L^2}+1)
\end{equation}
and the Gronwall inequality applied to this relation gives the desired estimate \eqref{2.est-dis-KS} and finishes the proof of the theorem.
\end{proof}
Note that, analogously to the previous section, estimate \eqref{2.est-dis-KS} is not strong enough to prevent blow up in finite time for the higher norms of the solution $u(t)$ for large exponents $p$. However, for sufficiently small values of $p$ it is sufficient to verify the global existence of  smooth solutions by the standard bootstrapping arguments. Since $H^1\subset L^\infty$ in the 1D case, it is sufficient to obtain the dissipative estimate in the $H^1$-norm which is done in the following corollary.
\begin{corollary}\label{Cor2.KS-h1} Let the assumptions of Theorem \ref{Th2.KS} hold and let, in addition $p\le6$. Then, problem \eqref{2.eq1} is globally well-posed in $H^1_0$ and the following estimate holds:
\begin{equation}\label{2.est-KS-h1}
\|u(t)\|_{H^1}\le Q(\|u_0\|_{H^1})e^{-\alpha t}+Q(\|g\|^2_{L^2}),
\end{equation}
where the positive constant $\alpha$ and a monotone function $Q$ are independent of $u$ and $t$.
\end{corollary}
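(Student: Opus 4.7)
The plan is to bootstrap from the $L^2$ dissipative estimate of Theorem \ref{Th2.KS} up to $H^1$ by using $-\partial_x^2 u$ as an energy multiplier, then to use the smoothing property of the fourth-order parabolic part to close everything into a dissipative $H^1$ bound.

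First, I would note that local well-posedness in $H^1_0$ is standard: the operator $\partial_x^4+\lambda\partial_x^2$ with the clamped conditions $u=\partial_x^2 u=0$ is sectorial on $L^2$, and in one dimension the embedding $H^1\hookrightarrow L^\infty$ makes both $\partial_x(u|u|^p)$ and $f(u)$ locally Lipschitz from $H^1_0$ into $H^{-1}$, so a classical fixed point / Galerkin argument yields a unique local solution. Hence everything reduces to the a priori estimate \eqref{2.est-KS-h1}, which we may derive on smooth solutions.

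Next I would multiply \eqref{2.eq1} by $-\partial_x^2 u$ and integrate. The boundary conditions $u=\partial_x^2 u=0$ kill all boundary contributions in the successive integrations by parts, producing
\[
\tfrac12\tfrac{d}{dt}\|\partial_x u\|^2_{L^2}+\|\partial_x^3 u\|^2_{L^2}=\lambda\|\partial_x^2 u\|^2_{L^2}+\int_{-1}^1 u|u|^p\,\partial_x^3 u\,dx+(f(u),-\partial_x^2 u)+(g,-\partial_x^2 u),
\]
where the convective term has been transformed via one integration by parts using $u(\pm1)=0$. The terms $\lambda\|\partial_x^2 u\|^2$ and $(g,-\partial_x^2 u)$ are trivially absorbed by Gagliardo--Nirenberg interpolation between $H^3$ and $L^2$ and by Young's inequality, at the cost of $C(\|u\|_{L^2}^2+\|g\|_{L^2}^2)$. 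The nonlinear term is handled analogously: since $q\le p\le 6$, the growth $|f(u)|\le C(1+|u|^{q+1})$ and the one-dimensional embedding $H^1\hookrightarrow L^\infty$ give $\|f(u)\|_{L^2}\le C(1+\|u\|_{L^\infty}^{q+1})$, and another Gagliardo--Nirenberg estimate reduces it to a power of $\|u\|_{L^2}$ plus a fraction of $\|\partial_x^3 u\|^2$.

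The main obstacle is the convective integral $\int u|u|^p\partial_x^3 u\,dx$, and this is exactly where the constraint $p\le 6$ enters. I would bound it by
\[
\Bigl|\int u|u|^p\,\partial_x^3 u\,dx\Bigr|\le \|u\|_{L^\infty}^p\,\|u\|_{L^2}\,\|\partial_x^3 u\|_{L^2},
\]
and then use the 1D Gagliardo--Nirenberg inequality $\|u\|_{L^\infty}\le C\|\partial_x^3 u\|_{L^2}^{1/6}\|u\|_{L^2}^{5/6}$ (justified by scaling), so that the right side is controlled by $C\|\partial_x^3 u\|_{L^2}^{1+p/6}\|u\|_{L^2}^{1+5p/6}$. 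For $p\le 6$ the exponent on $\|\partial_x^3 u\|$ does not exceed $2$, so Young's inequality absorbs a fraction of $\|\partial_x^3 u\|_{L^2}^2$ into the left-hand side and leaves only a power of $\|u\|_{L^2}$ (possibly multiplied by a small factor to handle the endpoint $p=6$).

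Combining these estimates gives a differential inequality of the form
\[
\tfrac{d}{dt}\|\partial_x u\|^2_{L^2}+c\,\|\partial_x^3 u\|^2_{L^2}\le Q_1(\|u\|_{L^2})+C\|g\|^2_{L^2},
\]
with $Q_1$ a monotone function that is bounded on trajectories thanks to Theorem \ref{Th2.KS}. The boundary conditions $u=\partial_x^2 u=0$ at $x=\pm1$ give the Poincar\'e-type inequalities $\|\partial_x u\|_{L^2}\le C\|\partial_x^3 u\|_{L^2}$, so the left-hand side dominates $\|\partial_x u\|^2_{L^2}$ and a standard Gr\"onwall (or uniform Gr\"onwall applied on unit time intervals, using the integrable bounds on $\|\partial_x^2 u\|^2_{L^2}$ and on $\|u\|^{p+2}_{L^{p+2}}$ given by Theorem \ref{Th2.KS}) yields the dissipative estimate \eqref{2.est-KS-h1}. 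The a priori bound together with the local well-posedness closes the argument and gives global well-posedness in $H^1_0$.
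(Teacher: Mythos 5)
Your overall strategy coincides with the paper's: both proofs multiply \eqref{2.eq1} by $\partial_x^2u$, reduce everything to the convective term, and feed in the $L^2$-dissipative estimate \eqref{2.est-dis-KS}; the restriction $p\le6$ enters at the same place. The difference is in how the convective integral is closed, and this difference creates a genuine gap at the endpoint $p=6$. You push the derivative onto the test function, bound the term by $C\|\partial_x^3u\|_{L^2}^{1+p/6}\|u\|_{L^2}^{1+5p/6}$ via $\|u\|_{L^\infty}\le C\|\partial_x^3u\|_{L^2}^{1/6}\|u\|_{L^2}^{5/6}$, and absorb by Young into $\|\partial_x^3u\|^2_{L^2}$. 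For $p<6$ this works. But at $p=6$ the exponent on $\|\partial_x^3u\|_{L^2}$ is exactly $2$ and the resulting term is $C\|u\|_{L^2}^{6}\|\partial_x^3u\|_{L^2}^2$; since $\|u\|_{L^2}$ is only \emph{bounded} (not small), there is no small factor available and Young's inequality cannot absorb it. Your parenthetical remark about ``a small factor to handle the endpoint'' does not resolve this: the coefficient is a fixed power of the $L^2$-norm of the solution, and a naive replacement such as $\|u\|_{L^\infty}^2\le\|u\|_{L^2}\|\partial_x u\|_{L^2}$ turns the differential inequality into a superlinear one in $\|\partial_x u\|^2_{L^2}$, which also does not close.

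The paper avoids this by integrating by parts the other way, obtaining $(p+1)(|u|^p\partial_xu,\partial_x^2u)\le C(|u|^{p-1},|\partial_xu|^3)$, and then \emph{keeping} a factor $\|\partial_xu\|^2_{L^2}$ (the Gronwall variable) while estimating the remaining factor by
$C\|u\|_{L^2}^{(3p-2)/4}\|\partial_x^2u\|_{L^2}^{(p+2)/4}$
using the interpolations $\|u\|_{L^\infty}\le C\|u\|_{L^2}^{3/4}\|\partial_x^2u\|_{L^2}^{1/4}$ and $\|\partial_xu\|_{L^\infty}\le C\|u\|_{L^2}^{1/4}\|\partial_x^2u\|_{L^2}^{3/4}$. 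The coefficient is then only required to be \emph{integrable in time} on unit intervals, which \eqref{2.est-dis-KS} provides precisely when $(p+2)/4\le2$, i.e.\ $p\le6$ including the endpoint. I would suggest either restating your result for $p<6$, or replacing the Young-absorption step by this Gronwall-with-integrable-coefficient argument. The rest of your sketch (local well-posedness, the treatment of $f(u)$, $g$ and the $\lambda$-term, and the Poincar\'e inequality $\|\partial_xu\|_{L^2}\le C\|\partial_x^3u\|_{L^2}$ following from the boundary conditions) is sound.
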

\begin{proof} We multiply equation \eqref{2.eq1} by $\partial_x^2u$ and integrate over $x$. This gives
\begin{equation}\label{2.h1-est}
\frac12\frac d{dt}\|\partial_x u\|^2_{L^2}+\|\partial_x^3 u\|^2_{L^2}\le\lambda\|\partial_x u\|^2_{L^2}+(p+1)(|u|^p\partial_xu,\partial_x^2u)-(f(u),\partial_x^2u)-(g,\partial_x^2u).
\end{equation}
We assume for simplicity that $p\ge1$ and estimate below only the most complicated second term in the right-hand side of \eqref{2.h1-est} (the other terms are of lower order and are simpler to estimate). To this end, we integrate it by parts once more and use the interpolation inequalities
$$
\|u\|_{L^\infty}\le C\|u\|_{L^2}^{3/4}\|\partial_x^2u\|^{1/4}_{L^2},\ \ \|\partial_x u\|_{L^\infty}\le \|u\|_{L^2}^{1/4}\|\partial_{x}^2u\|_{L^2}^{3/4}
$$
and obtain
\begin{multline}
|(p+1)(|u|^p\partial_x u,\partial_x^2u)|\le C(|u|^{p-1},|\partial_x u|^3)\le C\|u\|_{L^\infty}^{p-1}\|\partial_x u\|_{L^\infty}\|\partial_x u\|^2_{L^2}\le\\\le C\|u\|_{L^2}^{\frac{3p-2}4}\|\partial_x^2 u\|_{L^2}^{\frac{p+2}4}\|\partial_x u\|^2_{L^2}.
\end{multline}
Thus, in order to control this term, we need the function $t\to\|u(t)\|_{L^2}^{\frac{3p-2}4}\|\partial_x^2 u(t)\|_{L^2}^{\frac{p+2}4}$ to be integrable. According to estimate \eqref{2.est-dis-KS} it will be indeed the case if $p\le6$ and the corollary is proved.
\end{proof}
Let us consider now the case where $q>p$. In this case, the solution $u(t)$ may blow up in finite time at least for some initial data $u_0$ and nonliearity $f(u)$. We demonstrate it on the following example:
\begin{equation}\label{2.eq2}
\partial_t u+\partial_x^4u+\lambda\partial_{x}^2u+\partial_x(u|u|^p)=|u|^{q+1}+g,\ \ u\big|_{x=\pm1}=\partial_x^2 u\big|_{x=\pm1}=0,\ \ u\big|_{t=0}=u_0.
\end{equation}
\begin{theorem}\label{Th2.blowup-KS} Let $q>p$, $g\in L^2(\Omega)$ and $q>0$. Then, there exists
smooth initial data $u_0$ such that the corresponding solution $u(t)$ of problem \eqref{2.eq2} blows up in finite time.
\end{theorem}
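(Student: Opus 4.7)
The plan is to adapt the weighted-average argument of Theorem~\ref{Th1.blowup} to the fourth-order setting. Fix $\eb\in(0,1)$ and the weight $\varphi(x):=(\eb^2-x^2)^n$ on $[-\eb,\eb]$ (extended by zero), where the exponent $n$ will be chosen sufficiently large. Multiplying \eqref{2.eq2} by $\varphi$ and integrating over $(-\eb,\eb)$, the boundary contributions from integration by parts vanish provided $\varphi^{(k)}(\pm\eb)=0$ for $0\le k\le3$, which holds as soon as $n\ge4$. This yields
\begin{equation*}
\frac d{dt}\int_{-\eb}^{\eb}\! u\varphi\,dx=\int_{-\eb}^{\eb}\!|u|^{q+1}\varphi\,dx+\int_{-\eb}^{\eb}\! u|u|^p\varphi'\,dx-\int_{-\eb}^{\eb}\! u\varphi^{(4)}\,dx-\lambda\int_{-\eb}^{\eb}\! u\varphi''\,dx+\int_{-\eb}^{\eb}\! g\varphi\,dx.
\end{equation*}
Denote $J(t):=\int_{-\eb}^{\eb} u(t,x)\varphi(x)\,dx$. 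The strategy is to absorb every term on the right-hand side except the first into $\tfrac12\int|u|^{q+1}\varphi\,dx$ plus a constant, then bound that integral below by a multiple of $|J|^{q+1}$ via Jensen.

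For the convective term I would apply H\"older with exponents $(q+1)/(p+1)$ and $(q+1)/(q-p)$ to the splitting $|u|^{p+1}|\varphi'|=(|u|^{q+1}\varphi)^{(p+1)/(q+1)}\cdot|\varphi'|\varphi^{-(p+1)/(q+1)}$ and then Young, obtaining
\begin{equation*}
\Big|\int u|u|^p\varphi'\,dx\Big|\le\frac18\int|u|^{q+1}\varphi\,dx+C\int|\varphi'|^{(q+1)/(q-p)}\varphi^{-(p+1)/(q-p)}\,dx.
\end{equation*}
The last integral is finite as soon as $n(q-p)\ge q+1$, which is possible precisely because $q>p$. The analogous H\"older--Young bound for $\int u\varphi^{(4)}\,dx$ and $\int u\varphi''\,dx$ (using $|u|=(|u|^{q+1}\varphi)^{1/(q+1)}\varphi^{-1/(q+1)}$ with conjugate exponents $q+1$ and $(q+1)/q$) leaves behind integrals of $|\varphi^{(k)}|^{(q+1)/q}\varphi^{-1/q}$, which are finite provided $n\ge 4(q+1)/q$; the forcing $|\int g\varphi|$ is trivially bounded by $\|g\|_{L^2}\|\varphi\|_{L^2}$. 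Combining these bounds with Jensen's inequality $|J(t)|^{q+1}\le\bigl(\int\varphi\bigr)^q\int|u|^{q+1}\varphi\,dx$, one arrives at
\begin{equation*}
\frac d{dt}J(t)\ge\alpha\,|J(t)|^{q+1}-C
\end{equation*}
with constants $\alpha,C>0$ depending on $n,\eb,p,q,\lambda,\|g\|_{L^2}$ but not on $u$. Choosing a smooth $u_0$ with $J(0)>(2C/\alpha)^{1/(q+1)}$ then forces $J$ to be strictly increasing, so $J'\ge(\alpha/2)J^{q+1}$ persists for all subsequent times and $J(t)$ must escape to $+\infty$ in finite time.

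The only real obstacle is choosing the integer $n$ large enough to simultaneously (i)~kill all boundary terms from four integrations by parts, (ii)~make the weighted integrals coming from the $\varphi^{(4)}$ and $\varphi''$ terms integrable at $x=\pm\eb$, and (iii)~handle the convective weight integral. All three are met by $n\ge\max\bigl(4,\,4(q+1)/q,\,(q+1)/(q-p)\bigr)$, so the obstacle is really just bookkeeping. The essential point is that the strict inequality $q>p$ is exactly what renders the convective weight integral finite, i.e.\ what prevents the convection from swallowing the destabilising $|u|^{q+1}$ term in the test-function computation.
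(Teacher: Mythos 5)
Your proposal is correct and follows essentially the same route as the paper: the paper's proof is a one-line reduction to Theorem \ref{Th1.blowup}, multiplying by $\varphi(x)=(\eb^2-x^2)^n$ and noting that the only new term $(u,\varphi'''')$ is controlled as in \eqref{1.2d} once $n>\frac{4(q+1)}{q}$, which is exactly the H\"older--Young bookkeeping you carry out. Your explicit choice $n\ge\max\bigl(4,\,4(q+1)/q,\,(q+1)/(q-p)\bigr)$ and the treatment of the $\lambda\varphi''$ and convective terms match the paper's intended argument.
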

\begin{proof} The proof of this fact is based on the multiplication of equation \eqref{2.eq2} by $\varphi(x):=(\eb^2-x^2)^n$ and repeats word by word the proof of theorem \ref{Th1.blowup}. Indeed, the only new term is $(u,\varphi'''')$ which appear after the integration by parts in the term containing fourth derivative of $u$. This term can be estimated similar to \eqref{1.2d} if $n>\frac{4(q+1)}{q}$. So, leaving the details to the reader, we finish the proof of the theorem.
\end{proof}
We now switch to the so-called convective Cahn-Hilliard equation which gives another model example of fourth order 1D equations where the convective terms may prevent blow up. Namely, let us consider the following problem in $\Omega=(-1,1)$:
\begin{equation}\label{2.eq3}
\partial_{t} u+\partial_x^2\(\partial_x^2 u+f(u)\)+\partial_x(u|u|^p)=g,\ \ u\big|_{x=\pm1}=\partial_x^2u\big|_{x=\pm1}=0,\ \ u\big|_{t=0}=u_0,
\end{equation}
where the nonlinearity $f$ satisfies assumptions \eqref{2.f} and $g\in L^2(\Omega)$. In this case, the destabilizing nonlinearity $\partial_x^2f(u)$ is stronger than in the previous examples and we need stronger convective term in order to compensate it.
The next theorem is the analogue of Theorem \ref{Th2.KS} for this equation.
\begin{theorem}\label{Th2.KH} Let $g\in L^2(\Omega)$ and the nonlinearity $f$ satisfy \eqref{2.f}. Assume also that $p\ge2q$. Then, for any sufficiently smooth solution $u(t)$ of equation \eqref{2.eq3}, the following dissipative estimate holds:
\begin{equation}\label{2.dis-KH}
\|u(t)\|^2_{L^2}+\int_t^{t+1}\|\partial_{x}^2 u(s)\|^2_{L^2}+\|u(s)\|^{p+2}_{L^{p+2}}\,ds\le C\|u_0\|_{L^2}^2e^{-\alpha t}+C(\|g\|^2_{L^2}+1),
\end{equation}
where the positive constants $\alpha$ and $C$ are independent of $u_0$, $t$ and $g$.
\end{theorem}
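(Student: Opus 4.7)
The strategy mirrors that of Theorem~\ref{Th2.KS}: I multiply equation \eqref{2.eq3} by $u(t)e^{-Lx}$ and integrate over $\Omega=(-1,1)$, with $L>0$ to be fixed large. The contributions of $\partial_t u$, the biharmonic part $\partial_x^4 u$, the convective term $\partial_x(u|u|^p)$, and the forcing $g$ are handled exactly as in \eqref{2.equal}: two integrations by parts against $u e^{-Lx}$ (using $u|_{x=\pm1}=\partial_x^2 u|_{x=\pm 1}=0$) produce $\|\partial_x^2 u\|^2_{L^2_{e^{-Lx}}}$ together with lower-order terms in $\|\partial_x u\|^2_{L^2_{e^{-Lx}}}$ and $\|u\|^2_{L^2_{e^{-Lx}}}$ (absorbable via interpolation between $H^2$ and $L^2$), while the convective term yields the crucial positive quantity
\begin{equation*}
(\partial_x(u|u|^p),ue^{-Lx})=\tfrac{(p+1)L}{p+2}\|u\|^{p+2}_{L^{p+2}_{e^{-Lx}}}.
\end{equation*}

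The essential new ingredient, absent from the Kuramoto--Sivashinsky case, is the destabilizing term $(\partial_x^2 f(u),ue^{-Lx})$. Integrating by parts twice, with $u|_{x=\pm 1}=0$ eliminating boundary contributions and setting $G(u):=\int_0^u s\,f'(s)\,ds$, I get
\begin{equation*}
(\partial_x^2 f(u),ue^{-Lx})=-\int_{-1}^1 f'(u)(\partial_x u)^2 e^{-Lx}\,dx+L^2\int_{-1}^1 G(u)\,e^{-Lx}\,dx.
\end{equation*}
The growth assumption \eqref{2.f} yields $|f'(u)|\le C(1+|u|^q)$ and hence $|G(u)|\le C(u^2+|u|^{q+2})$, reducing the task to bounding $\int|u|^q(\partial_x u)^2\,dx$ and $\int|u|^{q+2}\,dx$ by the good terms $\|\partial_x^2 u\|^2_{L^2}$ and $\|u\|^{p+2}_{L^{p+2}}$.

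The crucial step is the treatment of $\int|u|^q(\partial_x u)^2\,dx$. Using the identity $|u|^q\partial_x u=\tfrac{1}{q+1}\partial_x(u|u|^q)$ and integrating by parts once more (boundary terms vanish by $u|_{\pm 1}=0$), I obtain
\begin{equation*}
\int_{-1}^1|u|^q(\partial_x u)^2\,dx=-\tfrac{1}{q+1}\int_{-1}^1 u|u|^q\,\partial_x^2 u\,dx\le C\,\|u\|^{q+1}_{L^{2q+2}}\|\partial_x^2 u\|_{L^2},
\end{equation*}
and by Young's inequality
\begin{equation*}
\int_{-1}^1|u|^q(\partial_x u)^2\,dx\le \epsilon\|\partial_x^2 u\|^2_{L^2}+C_\epsilon\|u\|^{2q+2}_{L^{2q+2}}.
\end{equation*}
The hypothesis $p\ge 2q$ is exactly what ensures $2q+2\le p+2$, so a further Young application on the bounded interval gives $\|u\|^{2q+2}_{L^{2q+2}}\le\eta\|u\|^{p+2}_{L^{p+2}}+C_\eta$ for arbitrary $\eta>0$; the $\int|u|^{q+2}\,dx$ term is controlled analogously (and more easily) using $q+2\le p+2$.

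Collecting all terms, choosing $\epsilon,\eta$ small and then $L$ sufficiently large so that the coefficient in front of $\|u\|^{p+2}_{L^{p+2}_{e^{-Lx}}}$ coming from convection absorbs every polynomial-in-$u$ right-hand side contribution, and using the equivalence of weighted and unweighted norms on $\Omega$ together with the Poincar\'e-type inequality $\|u\|_{L^2}\le C\|\partial_x^2 u\|_{L^2}$, I arrive at
\begin{equation*}
\tfrac{d}{dt}\|u\|^2_{L^2_{e^{-Lx}}}+\alpha\|u\|^2_{L^2_{e^{-Lx}}}+\alpha\bigl(\|\partial_x^2 u\|^2_{L^2}+\|u\|^{p+2}_{L^{p+2}}\bigr)\le C\bigl(\|g\|^2_{L^2}+1\bigr),
\end{equation*}
and Gronwall's inequality then yields \eqref{2.dis-KH}. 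The main obstacle is the term $f'(u)(\partial_x u)^2$ appearing after the first integration by parts of $\partial_x^2 f(u)$: this sign-indefinite quantity has no analogue in the Kuramoto--Sivashinsky setting and is precisely what forces the stronger compensation assumption $p\ge 2q$ in place of the $p\ge q$ used in Theorem~\ref{Th2.KS}.
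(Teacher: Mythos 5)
Your overall strategy is exactly the paper's: multiply by $ue^{-Lx}$, extract the positive convective contribution $\tfrac{(p+1)L}{p+2}\|u\|^{p+2}_{L^{p+2}_{e^{-Lx}}}$, and use $p\ge2q$ (i.e.\ $2q+2\le p+2$) to absorb the destabilizing nonlinearity. The only structural difference is in how $(\partial_x^2 f(u),ue^{-Lx})$ is processed: the paper integrates by parts twice onto the test function, obtaining $(f(u),\partial_x^2u\,e^{-Lx})$ plus zero-order terms controlled by $L^2\|u\|^{q+2}_{L^{q+2}_{e^{-Lx}}}$, and applies Cauchy--Schwarz directly; you stop after one integration by parts, meet $\int f'(u)(\partial_x u)^2e^{-Lx}\,dx$, and integrate by parts again to return to $\int u|u|^q\partial_x^2u\,dx$. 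Both routes land on the same two quantities, a small multiple of $\|\partial_x^2u\|^2$ and a multiple of $\|u\|^{2q+2}_{L^{2q+2}}$, so this is a cosmetic detour rather than a different proof.

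There is, however, a genuine gap at the borderline $p=2q$, which the hypothesis explicitly includes. When $2q+2=p+2$ the two norms coincide, so your claim that $\|u\|^{2q+2}_{L^{2q+2}}\le\eta\|u\|^{p+2}_{L^{p+2}}+C_\eta$ holds for \emph{arbitrary} $\eta>0$ is false: what must be absorbed is the fixed constant $C_\epsilon$ from the preceding Young step, not an arbitrarily small $\eta$. Your quantifier order (``choose $\epsilon,\eta$ small, then $L$ large'') then breaks down if, as you indicate, you pass to unweighted norms first: reabsorbing $\epsilon\|\partial_x^2u\|^2_{L^2}$ into $\|\partial_x^2u\|^2_{L^2_{e^{-Lx}}}\ge e^{-L}\|\partial_x^2u\|^2_{L^2}$ forces $\epsilon\le\tfrac12e^{-L}$, hence $C_\epsilon\ge\tfrac12e^{L}$, which the convective coefficient of order $L$ cannot beat. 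The repair, which is what the paper implicitly does, is to perform every Cauchy--Schwarz/Young step \emph{inside} the weighted space, so that the constant in front of $\|u\|^{2q+2}_{L^{2q+2}_{e^{-Lx}}}$ is independent of $L$ (e.g.\ $|(f(u),\partial_x^2u\,e^{-Lx})|\le\tfrac12\|\partial_x^2u\|^2_{L^2_{e^{-Lx}}}+C\|u\|^{2q+2}_{L^{2q+2}_{e^{-Lx}}}+C$ with $C$ depending only on $f$), then use the pointwise bound $|u|^{2q+2}\le|u|^{p+2}+1$ to compare with the convective term with coefficient exactly $1$, and only at the end choose $L$ large. With that reordering your argument closes, including at $p=2q$; as written it only proves the strict case $p>2q$.
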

\begin{proof} As before, we multiply equation \eqref{2.eq3} by $ue^{-Lx}$ and integrate over $x$. Then, analogously to \eqref{2.equal}, we get
\begin{multline}\label{2.equal-KH}
\frac12\frac d{dt}\|u\|^2_{L^2_{e^{-Lx}}}+\|\partial_{x}^2u\|_{L^2_{e^{-Lx}}}^2+
\frac{L^4}2\|u\|^2_{L^2_{e^{-Lx}}}+\frac{p+1}{p+2}L(|u|^{p+2},e^{-Lx})=L^2\|\partial_x u\|^2_{L^2_{e^{-Lx}}}+\\+(f(u),\partial_x^2u e^{-Lx})+L^2(f(u)u+\Phi(u),e^{-Lx})+(g,ue^{-Lx}),
\end{multline}
where $\Phi(u):=\int_0^uf(v)\,dv$. Using now assumption \eqref{2.f}, we see that
$$
|(f(u),\partial_x^2u e^{-Lx})|+|L^2(f(u)u+\Phi(u),e^{-Lx})|\le C\|u\|^{2q+2}_{L^{2q+2}_{e^{-Lx}}}+\frac12\|\partial_x^2 u\|^2_{L^2_{e^{-Lx}}}+CL^2\|u\|^{q+2}_{L^{q+2}_{e^{-Lx}}}.
$$
Since $p\ge2q$, we may fix $L$ large enough to absorb the right-hand side of the last estimate by the convective term in the left-hand side of \eqref{2.equal-KH} and end up (with the help of Young inequality) with estimate  \eqref{2.36}. This together with the Gronwall inequality give the desired estimate \eqref{2.dis-KH} and finish the proof of the theorem.
\end{proof}
As in the case of Kuramoto-Sivashinsky equation, estimate \eqref{2.dis-KH} is not strong enough to prevent blow up in higher norms if $p$ is large and some further restrictions on the exponents $p$ and $q$ are necessary for that. As in the case of Kuramoto-Sivashinsky equation, it is sufficient to verify the dissipative estimate in $H^1_0$-norm and the control of the higher norms will be obtained from it by the bootstrapping arguments. This estimate is given in the next corollary.
\begin{corollary}\label{Cor2.KH-dis-h1} Let $g\in L^2(\Omega)$, the nonlinearity $f$ satisfy \eqref{2.f} and, in addition, $p\le6$. Then, for any $u_0\in H^1_0(\Omega)$ the problem \eqref{2.eq3} is globally well-posed in $H^1$ and estimate \eqref{2.est-KS-h1} holds.
\end{corollary}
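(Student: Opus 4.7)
The plan is to mimic Corollary~\ref{Cor2.KS-h1}: standard parabolic theory gives local well-posedness of \eqref{2.eq3} in $H^1_0(\Omega)$, so the whole difficulty is to establish a dissipative $H^1$-bound that rules out finite-time blow up and yields \eqref{2.est-KS-h1}. I would test the equation against $-\partial_x^2 u$. Using the Dirichlet conditions $u|_{x=\pm1}=\partial_x^2 u|_{x=\pm1}=0$ to kill every boundary term and integrating by parts in the fourth-order and the $\partial_x^2 f(u)$ contributions, one arrives at the identity
\[
\tfrac{1}{2}\tfrac{d}{dt}\|\partial_x u\|^2_{L^2}+\|\partial_x^3 u\|^2_{L^2} = -\int_{-1}^{1}f'(u)\,\partial_x u\,\partial_x^3 u\,dx + (p+1)\int_{-1}^{1}|u|^p\,\partial_x u\,\partial_x^2 u\,dx -(g,\partial_x^2 u).
\]

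The second integral is identical to the convective contribution already handled in \eqref{2.h1-est}; after one further integration by parts it reduces to $C(|u|^{p-1},|\partial_x u|^3)$, which by the 1D Gagliardo--Nirenberg inequalities $\|u\|_{L^\infty}\le C\|u\|^{3/4}_{L^2}\|\partial_x^2 u\|^{1/4}_{L^2}$ and $\|\partial_x u\|_{L^\infty}\le C\|u\|^{1/4}_{L^2}\|\partial_x^2 u\|^{3/4}_{L^2}$ is dominated by $C\|u\|^{(3p-2)/4}_{L^2}\|\partial_x^2 u\|^{(p+2)/4}_{L^2}\|\partial_x u\|^2_{L^2}$, with the exponent $(p+2)/4\le 2$ precisely when $p\le 6$. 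The genuinely new term is $-\int f'(u)\,\partial_x u\,\partial_x^3 u\,dx$, produced by the extra second derivative in front of the destabilizing nonlinearity. Using $|f'(u)|\le C(1+|u|^q)$ and Young's inequality it is bounded by $\tfrac{1}{2}\|\partial_x^3 u\|^2_{L^2}+C(1+\|u\|^{2q}_{L^\infty})\|\partial_x u\|^2_{L^2}$, and a further Gagliardo--Nirenberg gives $\|u\|^{2q}_{L^\infty}\le C\|u\|^{3q/2}_{L^2}\|\partial_x^2 u\|^{q/2}_{L^2}$. The standing hypothesis $p\ge 2q$ of Theorem~\ref{Th2.KH}, combined with $p\le 6$, forces $q\le 3$, so the exponent $q/2$ is also at most $2$; the driving term $(g,\partial_x^2 u)$ is handled by the interpolation $\|\partial_x^2 u\|^2\le\|\partial_x u\|\,\|\partial_x^3 u\|$ and Young.

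Collecting these estimates one arrives at a differential inequality
\[
\tfrac{d}{dt}\|\partial_x u\|^2_{L^2}+\|\partial_x^3 u\|^2_{L^2}\le \Psi(t)\,\|\partial_x u\|^2_{L^2}+C(\|g\|^2_{L^2}+1),
\]
where $\int_t^{t+1}\Psi(s)\,ds$ is controlled by the $L^2$-bound of $u$ and the spacetime $L^2$-bound of $\partial_x^2 u$ from Theorem~\ref{Th2.KH}. Gronwall's inequality on each unit time interval, together with the short-time smoothing $\|\partial_x u(1)\|^2\le Q(\|u_0\|_{L^2})+Q(\|g\|_{L^2})$ obtained by multiplying by $t$ exactly as in the proof of Corollary~\ref{Cor1.bur-l2}, then yields \eqref{2.est-KS-h1}; global well-posedness in $H^1_0$ follows by standard continuation. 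The main technical obstacle is the new cubic term $\int f'(u)\,\partial_x u\,\partial_x^3 u\,dx$: without $p\ge 2q$ the prefactor $\|u\|^{2q}_{L^\infty}$ cannot be absorbed by the convective dissipation of Theorem~\ref{Th2.KH}, and without $p\le 6$ the exponent on $\|\partial_x^2 u\|_{L^2}$ arising from the convective nonlinearity exceeds $2$ and the Gronwall coefficient fails to be integrable on unit time intervals.
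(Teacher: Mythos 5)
Your proposal is correct and follows essentially the same route as the paper: multiply by $\partial_x^2u$, reuse the Corollary \ref{Cor2.KS-h1} estimate for the convective term (giving $p\le6$), bound the new term $(f'(u)\partial_x u,\partial_x^3u)$ via $\|u\|_{L^\infty}^{2q}\le C\|u\|_{L^2}^{3q/2}\|\partial_x^2u\|_{L^2}^{q/2}$ with the exponent controlled because $q\le p/2$, and close with Gronwall using the spacetime bound from Theorem \ref{Th2.KH}. The only differences are cosmetic (sign conventions and a more explicit write-up of the Gronwall/smoothing step).
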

\begin{proof} Analogously to the proof of Corollary \ref{Cor2.KS-h1}, we multiply equation \eqref{2.eq3} by $\partial_x^2 u$ and integrate over $\Omega$. Then, we only need to estimate two terms containing nonlinearities. The convective nonlinearity $\partial_x(u|u|^p)$ is estimated in the proof of Corollary \ref{Cor2.KS-h1} and it is under the control if $p\le6$. The second nonlinearity $\partial_{x}^2f(u)$ can be estimated using assumption \eqref{2.f}, integration by parts and the interpolation inequlaity as follows
\begin{multline*}
|(\partial_x^2 f(u),\partial_x^2 u)|=|(f'(u)\partial_x u,\partial_x^3u)|\le C(\|u\|^{2q}_{L^\infty}+1)\|\partial_xu\|^2_{L^2}+\\+\frac12\|\partial_x^3u\|^2_{L^2}\le C(\|u\|_{L^2}^{3q/2}\|\partial_x^2u\|^{q/2}_{L^2}+1)\|\partial_xu\|^2_{L^2}+
\frac12\|\partial_x^3u\|^2_{L^2}.
\end{multline*}
Thus, keeping in mind the dissipative estimate \eqref{2.dis-KH}, we see that this term is also under the control if $q\le4$. This condition is automatically satisfied since due to the assumption $q\le p/2\le6$ and the corollary is proved.
\end{proof}
To conclude this section, we give the analogue of Theorem \ref{Th2.blowup-KS} for the case of the Cahn-Hilliard equation. As in the previous case, we consider the problem
\begin{equation}\label{2.eq4}
\partial_tu+\partial_x^2(\partial_x^2u+|u|^{q+1})+\partial_x(u|u|^p)=g,\ \ u\big|_{x=\pm1}=\partial_x^2u\big|_{x=\pm1}=0,\ \ u\big|_{t=0}=u_0.
\end{equation}
\begin{theorem}\label{Th2.blowup-KH} Let $g\in L^2(\Omega)$ and $p<q-1$. Then, there exist smooth initial data $u_0$ such that the corresponding solution $u(t)$ of equation \eqref{2.eq4} blows up in finite time.
\end{theorem}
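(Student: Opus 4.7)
The plan is to mimic the approach used in Theorems \ref{Th1.blowup} and \ref{Th2.blowup-KS}: multiply equation \eqref{2.eq4} by the weight $\varphi(x) = (\varepsilon^2 - x^2)^n$, supported in $(-\varepsilon,\varepsilon)\subset(-1,1)$ for some small $\varepsilon>0$ and sufficiently large $n$, and integrate over $(-\varepsilon,\varepsilon)$. Because $\varphi$ and its first $n-1$ derivatives vanish at $\pm\varepsilon$, all the boundary terms produced by the successive integrations by parts drop out, and setting $F(t):=\int u(t,x)\varphi(x)\,dx$ one arrives at
\begin{equation*}
\frac{d}{dt} F(t) = -\int u\,\varphi^{(4)}\,dx - \int |u|^{q+1}\varphi''\,dx + \int u|u|^p\,\varphi'\,dx + \int g\,\varphi\,dx.
\end{equation*}

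The next step is to estimate each term on the right-hand side. The biharmonic contribution $\int u\,\varphi^{(4)}$ is controlled by Hölder's inequality exactly as the $(u,\varphi'''')$ term in the proof of Theorem \ref{Th2.blowup-KS}, taking $n$ large relative to $q$. The convective contribution $\int u|u|^p\varphi'$ is treated along the lines of \eqref{1.2d}: a Hölder estimate couples $|u|^{p+1}|\varphi'|$ to the (positive part of the) destabilizing weight; the resulting integrability condition on the weight function, together with Young's inequality, is precisely what forces the restriction $p<q-1$ and produces a bound of the form $\frac14(-\int |u|^{q+1}\varphi'')+C$. The forcing term $\int g\varphi$ is bounded by a constant via Cauchy-Schwarz.

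The heart of the proof is the destabilizing term. Note that
\begin{equation*}
\varphi''(x) = 2n(\varepsilon^2 - x^2)^{n-2}\bigl[(2n-1)x^2 - \varepsilon^2\bigr]
\end{equation*}
is negative on the central region $|x|<\varepsilon/\sqrt{2n-1}$ and positive near the ends of the support. On the central region, $-\varphi''$ is a legitimate positive weight, and Jensen's inequality yields
\begin{equation*}
-\int |u|^{q+1}\varphi''\,dx \geq c|F(t)|^{q+1} - C,
\end{equation*}
once $u_0 \in C_0^\infty((-\varepsilon,\varepsilon))$ is chosen nonnegative, smooth, and concentrated around $x=0$. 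Combining with the previous estimates gives the differential inequality $\frac{d}{dt} F \geq c|F|^{q+1} - C$, which forces blow-up in finite time as soon as $F(0)$ is taken large enough; this is arranged by scaling up the mass of $u_0$.

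The main technical obstacle is the sign-indefiniteness of $\varphi''$. In Theorems \ref{Th1.blowup} and \ref{Th2.blowup-KS} the destabilizing nonlinearity was coupled to the positive weight $\varphi$; here, after the two integrations by parts required by the Cahn--Hilliard structure, it is coupled to $\varphi''$, whose negative portion near the edges of the support must be dominated by its positive portion in the interior. Making this domination quantitative, and closing the corresponding Hölder estimate for the convective term with the effective weight $(-\varphi'')\mathbf{1}_{\{|x|<\varepsilon/\sqrt{2n-1}\}}$, is exactly what dictates the stronger hypothesis $p<q-1$ as opposed to the Burgers-type condition $p<q$.
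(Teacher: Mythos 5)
There is a genuine gap, and it sits exactly at the point you flag as ``the main technical obstacle.'' For a compactly supported bump $\varphi(x)=(\varepsilon^2-x^2)^n$ one has $\int_{-\varepsilon}^{\varepsilon}\varphi''\,dx=\varphi'(\varepsilon)-\varphi'(-\varepsilon)=0$, so $\varphi''$ \emph{must} change sign; as you compute, $-\varphi''>0$ only on the central region $|x|<\varepsilon/\sqrt{2n-1}$ and $-\varphi''<0$ near the ends of the support. Your proposed fix --- restrict attention to the central region, invoke Jensen there, and choose $u_0$ concentrated near $x=0$ --- does not close: concentration of the \emph{initial} data says nothing about $u(t)$ for $t>0$, and nothing prevents the $|u|^{q+1}$-mass from migrating into the annulus where $-\varphi''<0$, turning the ``destabilizing'' term into a stabilizing one. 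Moreover, Jensen requires the weight defining $F(t)=\int u\varphi\,dx$ and the weight in the lower bound to be the same positive weight up to constants; here $F$ integrates against $\varphi$ over all of $(-\varepsilon,\varepsilon)$ while your lower bound only uses $(-\varphi'')\mathbf{1}_{\{|x|<\varepsilon/\sqrt{2n-1}\}}$, so the inequality $-\int|u|^{q+1}\varphi''\ge c|F|^{q+1}-C$ is simply not available. The assertion that ``making this domination quantitative is exactly what dictates $p<q-1$'' is not substantiated and is not where that condition comes from.

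The paper resolves this by abandoning the localized bump altogether. It multiplies \eqref{2.eq4} by the explicit global weight $\varphi(x)=(1-x^2)(x^4-14x^2+61)$ on all of $(-1,1)$, engineered so that $\varphi(\pm1)=\varphi''(\pm1)=0$ (which, together with the boundary conditions $u=\partial_x^2u=0$, kills all boundary terms) and, crucially, so that
\begin{equation*}
\varphi''(x)=-30(1-x^2)(5-x^2)\le0,\qquad \varphi''''(x)=360(1-x^2),
\end{equation*}
with $|\varphi''|$ pointwise comparable to $\varphi$. Hence $-(|u|^{q+1},\varphi'')\ge\alpha(|u|^{q+1},\varphi)$ with no sign problem, the term $(u,\varphi'''')$ is absorbed the same way, and Jensen applies with the single weight $\varphi$. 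The hypothesis $p<q-1$ then enters only through the H\"older/Young estimate for the convective term, $(|u|^{p+1},1)\le(|u|^{q+1},\varphi)^{\frac{p+1}{q+1}}(\varphi^{-\frac{1}{q-p}},1)^{\frac{q-p}{q+1}}$, where integrability of the negative power of $\varphi$ uses that $\varphi$ vanishes only to \emph{first} order at $x=\pm1$; this computation is unavailable for $(\varepsilon^2-x^2)^n$ with large $n$, which vanishes to order $n$. So the correct move is not to localize but to construct a boundary-adapted polynomial weight with concave profile; your outline would need to be rebuilt around such a weight.
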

\begin{proof} We multiply equation \eqref{2.eq4} by the function $\varphi(x):=(1-x^2)(x^4-14x^2+61)$ and integrate over $x\in(-1,1)$. Then, integrating by parts and using the boundary conditions, we have
\begin{equation}\label{2.blow}
\frac d{dt}(u,\varphi)+(|u|^{q+1},\varphi'')+(u,\varphi'''')=(u|u|^p,\varphi')+(g,\varphi).
\end{equation}
Using also  the obvious fact that
$$
\varphi''(x)=-30(1-x^2)(5-x^2),\ \ \varphi''''(x)=360(1-x^2),
$$
we see that the linear term in \eqref{2.blow} can be absorbed by the term containing $|u|^{q+1}$ and we end up with
\begin{equation}\label{2.blow1}
\frac d{dt}(u,\varphi)\ge\alpha(|u|^{q+1},\varphi)-C(|u|^{p+1},1)-C
\end{equation}
for some positive constants $\alpha$ and $C$. Thus, we only need to estimate the second term in the right-hand side of \eqref{2.blow1}. We will do this by H\"older inequality as follows:
$$
(|u|^{p+1},1)=(|u|^{p+1}\varphi^{\frac1{q+1}},\varphi^{-\frac1{q+1}})\le \(|u|^{q+1},\varphi\)^{\frac{p+1}{q+1}}\(\varphi^{-\frac{1}{q-p}},1\)^{\frac{q-p}{q+1}}.
$$
Since $p<q-1$, the last integral in the right-hand side is finite and applying the Young inequality, we finally arrive at
\begin{equation}
\frac d{dt}(u,\varphi)\ge \beta|(u,\varphi)|^{q+1}-C
\end{equation}
for some positive $\beta$ and $C$ which finishes the proof of the theorem.
\end{proof}
\begin{remark}\label{Rem2.bad} In contrast to the case of Kuramoto-Sivashinsky equation and reaction diffusion equations considered before, the obtained results are not complete. In particular, the behavior of solutions remain unclear if
\begin{equation}
\frac p2< q\le p+1.
\end{equation}
The similar equation with periodic boundary conditions has been studied in \cite{EdKaZe} using different methods. In particular, the examples of blowing up solutions for the case $q=2$ and $p=1$ were given there. The analogous method works for our case as well, but it gives the condition $q\ge 2p$ for the blow up which is weaker than our result if $p>1$.
\end{remark}

\section{KdV type equations}\label{s3}
In this section, we apply the above developed methods to study the generalized KdV equation equipped with Dirichlet boundary conditions:
\begin{equation}\label{3.eq1}
\partial_t u+\partial_x^3u=\partial_x(u|u|^p)+f(u)+g,\ \ u\big|_{x=-1}=u\big|_{x=1}=\partial_x u\big|_{x=1}=0,\ \ u\big|_{t=0}=u_0,
\end{equation}
where the nonlinearity $f$ satisfies assumptions \eqref{2.f}. We first note that, in contrast to the case of periodic boundary conditions, the KdV type equations with Dirichlet boundary conditions are not conservative and the corresponding linear equation possesses a smoothing property on a finite interval and even generates a $C^\infty$-semigroup, see \cite{KdV} for the details. By this reason, the local solvability properties of these equations are close to the parabolic ones, so we will concentrate only on the derivation of the proper a priori estimates. The next theorem is the analogue of Theorem \ref{Th1.bur} for this equation.
\begin{theorem} Let $g\in L^2(\Omega)$, $p>0$ and $p\ge q$. Then, any sufficiently regular solution of \eqref{3.eq1} possesses the following estimate:
\begin{equation}\label{3.dis-l2}
\|u(t)\|^2_{L^2}+\int_t^{t+1}\|\partial_x u(s)\|^2_{L^2}\,ds\le C\|u_0\|^2_{L^2}e^{-\alpha t}+C(\|g\|^2_{L^2}+1),
\end{equation}
where the positive constants $C$ and $\alpha$ are independent of $u_0$, $g$ and $t$.
\end{theorem}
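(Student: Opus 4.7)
The plan is to adapt the weighted energy method from Sections~\ref{s1} and~\ref{s2} to equation \eqref{3.eq1}. Concretely, multiply the equation by $u(t)e^{Lx}$ for a parameter $L>0$ to be chosen large, then integrate over $\Omega=(-1,1)$. The sign of the weight is dictated by the form of the equation: since the convective term $\partial_x(u|u|^p)$ sits on the right-hand side of \eqref{3.eq1}, the weight $e^{Lx}$ (rather than $e^{-Lx}$) is what produces a positive dissipative contribution from the convective term once we regroup terms on the left.

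Performing the integrations by parts, the crucial calculation is for the linear dispersive term. Using the boundary conditions $u(\pm 1)=0$ and $\partial_x u(1)=0$ and integrating by parts three times, one finds
$$
(\partial_x^3 u,\,u e^{Lx}) = \tfrac12(\partial_x u(-1))^2 e^{-L} + \tfrac{3L}{2}\|\partial_x u\|^2_{L^2_{e^{Lx}}} - \tfrac{L^3}{2}\|u\|^2_{L^2_{e^{Lx}}},
$$
which provides the positive dissipation $\tfrac{3L}{2}\|\partial_x u\|^2_{L^2_{e^{Lx}}}$ and a non-negative boundary remainder at $x=-1$, at the price of an auxiliary destabilizing term $-\tfrac{L^3}{2}\|u\|^2_{L^2_{e^{Lx}}}$ coming from $\partial_x^3$ of the weight. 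In parallel, a single integration by parts in the convective term yields $(\partial_x(u|u|^p),\,u e^{Lx}) = -\tfrac{(p+1)L}{p+2}\|u\|^{p+2}_{L^{p+2}_{e^{Lx}}}$, so transferring it to the left-hand side produces a positive dissipation of order $L\|u\|^{p+2}$.

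It then remains to absorb the destabilizing terms on the right. By \eqref{2.f} and $q\le p$, the nonlinearity satisfies $|(f(u),u e^{Lx})|\le C\|u\|^2_{L^2_{e^{Lx}}}+C\|u\|^{q+2}_{L^{q+2}_{e^{Lx}}}$, and Young's inequality together with the boundedness of $\Omega$ yields $\|u\|^{q+2}_{L^{q+2}_{e^{Lx}}}\le\eta\|u\|^{p+2}_{L^{p+2}_{e^{Lx}}}+C_\eta$. The auxiliary $\tfrac{L^3}{2}\|u\|^2_{L^2_{e^{Lx}}}$ from the dispersive IBP is handled identically via $\|u\|^2\le\eta\|u\|^{p+2}+C_\eta$, which is available because $p>0$. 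The forcing $g$ is estimated by Cauchy--Schwarz. Choosing $\eta$ small and $L$ large enough (depending only on $p$, $q$ and the constants in \eqref{2.f}), all destabilizing terms are absorbed into the convective dissipation $\tfrac{(p+1)L}{p+2}\|u\|^{p+2}_{L^{p+2}_{e^{Lx}}}$ and into half of $\tfrac{3L}{2}\|\partial_x u\|^2_{L^2_{e^{Lx}}}$, and we arrive at a differential inequality of the form
$$
\frac{d}{dt}\|u\|^2_{L^2_{e^{Lx}}} + \alpha\|u\|^2_{L^2_{e^{Lx}}} + \alpha\|\partial_x u\|^2_{L^2_{e^{Lx}}} \le C(\|g\|^2_{L^2}+1).
$$

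Gronwall then gives exponential decay of $\|u(t)\|^2_{L^2_{e^{Lx}}}$; integrating the gradient term over $[t,t+1]$ and using the equivalence between the weighted and standard $L^2$-norms on the bounded interval $(-1,1)$ produces the dissipative estimate \eqref{3.dis-l2}. The main obstacle compared with the Burger's case is the auxiliary term $-\tfrac{L^3}{2}\|u\|^2_{L^2_{e^{Lx}}}$ produced by the third-order IBP: it grows cubically in $L$ while the convective dissipation grows only linearly in $L$, so the absorption has to go through the super-quadratic term $\|u\|^{p+2}$ (and thus crucially uses $p>0$) rather than through a Poincar\'e-type bound alone.
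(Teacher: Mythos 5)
Your proposal is correct and follows essentially the same route as the paper: multiplication by $ue^{Lx}$, the triple integration by parts giving $\tfrac{3L}{2}\|\partial_x u\|^2_{L^2_{e^{Lx}}}-\tfrac{L^3}{2}\|u\|^2_{L^2_{e^{Lx}}}$ plus a non-negative boundary term at $x=-1$, the convective term yielding the $L\|u\|^{p+2}_{L^{p+2}_{e^{Lx}}}$ dissipation, and absorption of the $\|u\|^{q+2}$ and $L^3\|u\|^2$ terms via $q\le p$, $p>0$ and Young, followed by Gronwall. The only (immaterial) discrepancy is the factor $\tfrac12$ on the boundary term $e^{-L}|\partial_x u(-1)|^2$, which you have right and the paper omits.
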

\begin{proof} As before, we multiply the equation \eqref{3.eq1} by $ue^{Lx}$ and integrate over $\Omega$. Then, using the obvious integration by parts
\begin{multline*}
(\partial_x^3 u,ue^{Lx})=-(\partial_x^2 u,e^{Lx}\partial_x u)-L(\partial_x^2 u,ue^{Lx})=
\frac L2(|\partial_x u|^2,e^{Lx})+\\+L(|\partial_x^2 u|^2,e^{Lx})+L^2(\partial_x u,ue^{Lx})+e^{-L}|\partial_x u(-1)|^2=\\=\frac32 L(|\partial_x u|^2,e^{Lx})-\frac{L^3}{2}(|u|^2,e^{Lx})+e^{-L}|\partial_x u(-1)|^2
\end{multline*}
and using assumptions \eqref{2.f} on the nonlinearity $f$,
we end up with
\begin{multline}\label{1.est-dif}
\frac12\frac d{dt}\|u\|^2_{L^2_{e^{Lx}}}+\frac{3L}2\|\partial_x u\|^2_{L^2_{e^{Lx}}}+\frac{p+1}{p+2}L\|u\|^{p+2}_{L^{p+2}_{e^{Lx}}}\le\\\le C(\|u\|^{q+2}_{L^{q+2}_{e^{Lx}}}+1+\|g\|^2_{L^2_{e^{Lx}}})+C(L^3+1)\|u\|^2_{L^2_{e^{Lx}}}.
\end{multline}
Since $p>0$ and $p\ge q$, we may fix $L$ large enough that the terms containing $u$ on the right-hand side will be absorbed by the $L^{p+2}$-norm in the left-hand side. Applying then the Gronwall inequality to the obtained relation, we get the desired estimate \eqref{3.dis-l2} and finish the proof of the theorem.
\end{proof}
As before, the dissipative estimate \eqref{3.dis-l2} give the global well-posedness and smoothness of solutions of \eqref{3.eq1} if $p$ is small enough.
\begin{corollary}\label{Co3.smooth} Let $g\in L^2(\Omega)$, $0<p\le 2$ and $q\le p$. Then, problem \eqref{3.eq1} is globally well-posed in $L^2(\Omega)$. Moreover, $u(t)\in H^3$ for all $t>0$ and the following estimate holds:
\begin{equation}\label{3.h3-dis}
\|u(t)\|_{H^3}^2\le \frac{1+t^3}{t^3}\(Q(\|u_0\|_{L^2})e^{-\alpha t}+Q(\|g\|_{L^2})\),
\end{equation}
for some positive $\alpha$ and monotone function $Q$.
\end{corollary}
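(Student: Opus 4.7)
The plan is to combine the a priori $L^2$-dissipative bound \eqref{3.dis-l2} with the smoothing property of the linear KdV semigroup generated by $-\partial_x^3$ under the boundary conditions $u|_{x=\pm1}=\partial_x u|_{x=1}=0$ (see \cite{KdV}) to obtain both global well-posedness in $L^2(\Omega)$ and the $H^3$-smoothing estimate \eqref{3.h3-dis}. Local well-posedness in $L^2$ follows from a standard Duhamel fixed-point argument: the linear semigroup is instantaneously smoothing with quantitative bounds of type $\|e^{-t\partial_x^3}u_0\|_{H^s}\le Ct^{-s/3}\|u_0\|_{L^2}$, and the nonlinear map $u\mapsto \partial_x(u|u|^p)+f(u)$ is locally Lipschitz from $H^1$ (with the above boundary conditions) into $L^2$ thanks to the 1D embedding $H^1\hookrightarrow L^\infty$, the growth assumption \eqref{2.f}, and the restriction $p\le 2$, which yields the pointwise-in-time control $\|\partial_x(u|u|^p)\|_{L^2}\le C\|u\|_{L^\infty}^p\|\partial_x u\|_{L^2}$. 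Global existence in $L^2$ is then immediate from \eqref{3.dis-l2}, which precludes $L^2$-blow up on any finite time interval.

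For the $H^3$-regularization, I would bootstrap via a sequence of weighted energy estimates with growing time factors, in the spirit of the preceding theorem. At the first step, I differentiate the equation once in $x$, multiply by $t\,(\partial_x u)\,e^{Lx}$ with $L$ sufficiently large, and integrate by parts exactly as in the derivation of \eqref{3.dis-l2}; using \eqref{3.dis-l2} together with Gronwall yields an $H^1$-bound with prefactor $(1+t)/t$. Iterating the procedure (differentiating the equation once more, then again, and testing successively against $t^2(\partial_x^2 u)e^{Lx}$ and $t^3(\partial_x^3 u)e^{Lx}$), each step gains one spatial derivative at the cost of one additional factor $1/t$ and reuses the previous stage to close the Gronwall argument; after three iterations one reaches \eqref{3.h3-dis} with the prefactor $(1+t^3)/t^3$ and the exponential decay $e^{-\alpha t}$ inherited from \eqref{3.dis-l2}.

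The main obstacle is twofold. First, each integration by parts against $\partial_x^3$ produces boundary contributions, and the asymmetric Dirichlet conditions $u|_{x=\pm1}=\partial_x u|_{x=1}=0$ are exactly what is needed: as the computation preceding \eqref{1.est-dif} shows, testing against $e^{Lx}(\cdot)$ produces a favorable dissipative boundary term of the form $e^{-L}|\partial_x u(-1)|^2$ at the left endpoint, while the condition $\partial_x u|_{x=1}=0$ eliminates the dangerous counterpart at the right endpoint; this favorable structure has to be traced carefully through the boundary conditions inherited by $\partial_x u$, $\partial_x^2 u$, and $\partial_x^3 u$ at $x=\pm1$ after each differentiation of the equation in $x$. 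Second, the differentiated nonlinearities coming from $\partial_x(u|u|^p)$ and $f(u)$ must still be absorbed into the weighted dissipative terms at every stage; the restriction $p\le 2$, combined with $q\le p$ and the 1D embedding $H^1\hookrightarrow L^\infty$, is exactly what renders all these terms subcritical with respect to the energy available at the previous stage, thereby allowing the bootstrap to close and yielding \eqref{3.h3-dis}.
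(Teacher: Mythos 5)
Your local well-posedness argument (Duhamel plus the smoothing of the linear semigroup from \cite{KdV}, combined with the a priori bound \eqref{3.dis-l2}) is consistent with what the paper sketches. But the core of your proof --- the $H^3$-regularization via successive differentiation of the equation in $x$ and testing $\partial_x^k u$ against $t^k(\partial_x^k u)e^{Lx}$ --- has a genuine gap that you gesture at but do not resolve. The favorable boundary structure does \emph{not} survive spatial differentiation. The function $v=\partial_x u$ satisfies only $v|_{x=1}=0$; the value $v(-1)=\partial_x u(-1)$ is generically nonzero (indeed, the dissipative boundary term $e^{-L}|\partial_x u(-1)|^2$ in the computation preceding \eqref{1.est-dif} is the whole point of the asymmetric boundary conditions). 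Consequently, when you integrate $(\partial_x^3 v, v e^{Lx})$ by parts you pick up the boundary contribution $-e^{-L}\,\partial_x^2 v(-1)\,v(-1)=-e^{-L}\,\partial_x^3u(-1)\,\partial_x u(-1)$, which involves a trace of $\partial_x^3 u$ that is neither sign-definite nor controlled by any quantity available at that stage of the bootstrap. The same obstruction recurs, worse, at the second and third iterations. So the scheme as written does not close.

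The paper circumvents exactly this difficulty by differentiating in \emph{time} rather than in space: $w=\partial_t u$ inherits the full set of boundary conditions $w|_{x=\pm1}=\partial_x w|_{x=1}=0$ because those conditions are time-independent, so the weighted energy estimate for $w$ (inequality \eqref{3.dif5}) goes through verbatim, with the same favorable boundary term. The spatial regularity is then recovered algebraically from the equation itself, via $\|\partial_x^3 u\|_{L^2}^2\le C\|\partial_t u\|_{L^2}^2+\dots$ (estimate \eqref{3.l2h3}), and the $t^3$ prefactor comes from multiplying the $w$-equation by $t^3$ together with an interpolation through the $H^{-2}$-norm of $\partial_t u$, which is in turn bounded by the equation from the $L^2$ and $H^1$ norms of $u$. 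If you want to salvage your route, you would have to either prove that the offending traces vanish or are controllable (they are not, in general), or switch to the time-differentiation argument; note also that the paper separately establishes uniqueness by a weighted energy estimate for the difference of two solutions, using $p\le2$ to make $\|u_i\|_{L^\infty}^{2p}$ integrable in time, a step your contraction argument covers only on the short local existence interval.
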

\begin{proof} We give below only the formal derivation of the corresponding estimates which can be justified in a straightforward way. We start with the uniqueness. Let $u_1(t)$ and $u_2(t)$ be two slutions of \eqref{3.eq1} and let $v=u_1-u_2$. Then, this function solves
\begin{equation}\label{3.dif}
\partial_t v+\partial_x^3 v=\partial_x(u_1|u_1|^p-u_2|u_2|^p)+f(u_1)-f(u_2).
\end{equation}
Multiplying this equation by $ve^{Lt}$ using assumptions \eqref{2.f} and arguing as before, we end up with
\begin{equation}\label{3.dif1}
\frac12\frac d{dt}\|v\|^2_{L^2_{e^{Lx}}}+\alpha \|\partial_x v\|^2_{L^2_{e^{Lx}}}+\alpha(|u_1|^p+|u_2|^p,v^2e^{Lx})\le C(\|u_1\|^{2p}_{L^\infty}+\|u_2\|^{2p}_{L^\infty}+1)\|v\|^2_{L^2_{e^{Lx}}}
\end{equation}
for some positive $C$ and $\alpha$ which are independent of $u_1$ and $u_2$. Using the interpolation inequality $\|u\|_{L^\infty}^2\le \|u\|_{L^2}\|\partial_x u\|_{L^2}$ together with the dissipative estimate \eqref{3.dis-l2} and assumption $p\le2$, we see that the $L^1$-norm in time of $\|u_i\|^{2p}_{L^\infty}$ is under the control and the Gronwall inequality applied to \eqref{3.dif1} gives
\begin{equation}\label{3.dif2}
\|v(t)\|^2_{L^2}\le C\|v(0)\|^2_{L^2}e^{Kt},
\end{equation}
where the constants $C$ and $K$ may depend on the $L^2$-norms of $u_i$. This proves the uniqueness.
\par
To verify the smoothing property \eqref{3.h3-dis}, we differentiate equation \eqref{3.eq1} in time and denote $w=\partial_t u$. Then this function solves
\begin{equation}\label{3.dif4}
\partial_t w+\partial_x^3 w=(p+1)\partial_x(|u|^pw)+f'(u)w.
\end{equation}
Multiplying this equation by $we^{Lx}$ and arguing analogously, we end up with
\begin{equation}\label{3.dif5}
\frac12\frac d{dt}\|w\|^2_{L^2_{e^{Lx}}}+\alpha \|\partial_x w\|^2_{L^2_{e^{Lx}}}+\alpha(|u|^p,w^2e^{Lx})\le C(\|u\|^{2p}_{L^\infty}+1)\|w\|^2_{L^2_{e^{Lx}}}.
\end{equation}
Applying the Gronvall inequality to this relation, we get
\begin{equation}\label{3.dif6}
\|\Dt u(t)\|^2_{L^2}\le Q(\|u_0\|^2_{L^2})e^{K(\|u_0\|_{L^2})t}\|\Dt u(0)\|^2_{L^2}
\end{equation}
for some monotone functions $Q$ and $K$. Using the obvious estimate
$$
\|\Dt u(0)\|_{L^2}\le Q(\|u_0\|_{H^3}),
$$
we see that the $L^2$-norm of $\partial_t u(t)$ is under the control. Let us derive the opposite estimate. To this end, we multiply equation \eqref{3.eq1} by $\partial_x^3u$ and use the H\"older inequality to obtain
\begin{equation}\label{3.l2h3}
\|\partial_x^3u(t)\|^2_{L^2}\le C\|\partial_t u(t)\|^2_{L^2}+C(\|u(t)\|^{4}_{L^\infty}(\|\partial_x u(t)\|^2_{L^2}+1)+\|g\|^2_{L^2}+1).
\end{equation}
Using the obvious interpolation, we get
$$
\|u\|^{4}_{L^\infty}\|\partial_x u\|^2_{L^2}\le C\|u\|^{10/3}_{L^2}\|\partial_x^3 u\|_{L^2}^{2/3}\|u\|_{L^2}\|\partial_x^3u\|_{L^2}=C\|u\|_{L^2}^{13/3}\|\partial_x^3u\|_{L^2}^{5/3}
$$
and, therefore,
\begin{equation}\label{3.dif7}
\|\partial_x^3 u(t)\|^2_{L^2}\le C(1+\|\partial_t u(t)\|_{L^2}^2+\|g\|^2_{L^2}+\|u(t)\|^{26}_{L^2}).
\end{equation}
Combining estimates \eqref{3.dif7} and \eqref{3.dif6}, we get the $H^3$-control of the solution $u(t)$:
\begin{equation}\label{3.dif8}
\|u(t)\|_{H^3}\le Q(\|u_0\|_{H^3}+\|g\|_{L^2})e^{K(\|u_0\|_{L^2})t}.
\end{equation}
However, this estimate is still not dissipative. To obtain the dissipative estimate, we need the smoothing property for $\partial_t u$. To this end, we note that
$$
\|\partial_t u(t)\|_{H^{-2}}^2\le C(\|\partial_x u(t)\|_{L^2}^2+\|u|u|^p\|_{H^{-1}}^2+\|f(u)\|_{H^{-2}}^2+\|g\|^2_{L^2})
$$
and
$$
|(u|u|^p,\varphi)|\le(\|u\|^{3}_{L^3}+1)\|\varphi\|_{L^\infty}\le (\|u\|^3_{L^3}+1)\|\varphi\|_{H^1_0}.
$$
Therefore,
$$
\|u|u|^p\|_{H^{-1}}^2\le C(\|u\|^6_{L^3}+1)\le C(\|u\|^4_{L^2}\|\partial_x u\|^2_{L^2}+1).
$$
Estimate for the lower order term $f(u)$ can be obtained analogously and we arrive at
\begin{equation}\label{3.eq9}
\|\Dt u(t)\|^2_{H^{-2}}\le C\((\|u\|^4_{L^2}+1)\|\partial_x u(t)\|_{L^2}^2+1+\|g\|^2_{L^2}\).
\end{equation}
We are now ready to finish the proof of the smoothing property. Indeed, we multiply \eqref{3.dif5} by $t$ and use the interpolation
\begin{equation}\label{3.dif-infty}
3t^{2}\|v(t)\|^2_{L^2_{e^{Lx}}}\le C\|v(t)\|_{H^{-2}}^{2/3}(t^{3/2}\|v(t)\|_{H^1})^{4/3}\le \frac\alpha2t^3\|\partial_x v(t)\|^2_{L^2_{e^{Lx}}}+C\|\partial_t u(t)\|^2_{H^{-2}}.
\end{equation}
Then, combining the last estimate with \eqref{3.eq9} we get
\begin{multline}\label{3.dif10}
\frac d{dt}(t^3\|v\|^2_{L^2_{e^{Lx}}})+\alpha t^3\|\partial_x v\|^2_{L^2_{e^{Lx}}}\le Ct^3(\|u\|^{2}_{L^2}\|\partial_x u\|^2_{L^2}+1)\|v\|^2_{L^2_{e^{Lx}}}+\\+C\((\|u\|^4_{L^2}+1)\|\partial_x u(t)\|_{L^2}^2+1+\|g\|^2_{L^2}\).
\end{multline}
Applying the Gronwall inequality to this relation and using the dissipative estimate \eqref{3.dis-l2}, we finally arrive at
\begin{equation}\label{3.dis11}
t^3\|\partial_t u(t)\|^2_{L^2}\le Q(\|u_0\|_{L^2})+Q(\|g\|_{L^2}),\ \ t\in(0,1]
\end{equation}
for some monotone function $Q$. In particular, fixing $t=1$ in this inequality, we have
\begin{equation}\label{3.dis12}
\|\partial_t u(t+1)\|_{L^2}\le Q(\|u(t)\|_{L^2})+Q(\|g\|_{L^2}).
\end{equation}
It only remains to note that estimates \eqref{3.dis11} and \eqref{3.dis12} together with \eqref{3.l2h3} and the dissipative estimate \eqref{3.dis-l2} give the desired estimate \eqref{3.h3-dis} and finish the proof of the corollary.
\end{proof}
Finally, we show that the blow up in finite time of smooth solutions of \eqref{3.eq1} becomes possible if the condition $p\ge q$ is violated. To this end, we consider the following equation:
\begin{equation}\label{3.eq2}
\partial_t u+\partial_x^3 u=\partial_x(u|u|^p)+|u|^{q+1}+g, \ \ u\big|_{x=-1}=u\big|_{x=1}=\partial_x u\big|_{x=1}=0,\ \ u\big|_{t=0}=u_0.
\end{equation}
Then, the following result holds.
\begin{theorem}\label{Th3.blow} Let $g\in L^2(\Omega)$, $q>0$ and $p<q$. Then, there exists somooth initial data $u_0$ such that the corresponding solution $u(t)$ of \eqref{3.eq2} blows up in finite time.
\end{theorem}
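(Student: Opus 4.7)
The plan is to mirror the proofs of Theorems~\ref{Th1.blowup} and~\ref{Th2.blowup-KS}, using the same family of compactly supported test functions. Fix $\eb\in(0,1)$ and set $\varphi(x):=(\eb^2-x^2)^n$ on $[-\eb,\eb]$, extended by zero to $[-1,1]$, with an exponent $n$ to be fixed sufficiently large so that $\varphi$ together with its first two derivatives vanishes at $x=\pm\eb$. Multiplying equation~\eqref{3.eq2} by $\varphi$, integrating over $x\in(-\eb,\eb)$, and integrating by parts three times on the dispersive term (with no boundary contributions, since $\operatorname{supp}\varphi\Subset(-1,1)$ and $\varphi$ has the required vanishing at $\pm\eb$), we use $(\partial_x^3u,\varphi)=-(u,\varphi''')$ and $(\partial_x(u|u|^p),\varphi)=-(u|u|^p,\varphi')$ to arrive at the identity
\begin{equation*}
\frac{d}{dt}(u,\varphi)=(u,\varphi''')-(u|u|^p,\varphi')+(|u|^{q+1},\varphi)+(g,\varphi).
\end{equation*}

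The next step is to absorb the two indefinite terms on the right-hand side into the growth term $(|u|^{q+1},\varphi)$, exactly along the lines of~\eqref{1.2d}. H\"older's inequality, using the hypothesis $p<q$, gives
\begin{equation*}
|(u|u|^p,\varphi')|\le \tfrac14(|u|^{q+1},\varphi)+C\int_{-\eb}^{\eb}|\varphi'|^{\frac{q+1}{q-p}}\varphi^{-\frac{p+1}{q-p}}\,dx
\end{equation*}
and
\begin{equation*}
|(u,\varphi''')|\le \tfrac14(|u|^{q+1},\varphi)+C\int_{-\eb}^{\eb}|\varphi'''|^{\frac{q+1}{q}}\varphi^{-\frac{1}{q}}\,dx.
\end{equation*}
A direct computation with $\varphi=(\eb^2-x^2)^n$ shows that the first residual integrand is bounded on $(-\eb,\eb)$ once $n\ge\frac{q+1}{q-p}$, while the second is bounded once $n\ge\frac{3(q+1)}{q}$; we fix $n$ above both thresholds.

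Combining these estimates with the trivial bound $|(g,\varphi)|\le C$ and invoking Jensen's inequality in the form $|(u,\varphi)|^{q+1}\le\bigl(\textstyle\int\varphi\,dx\bigr)^{q}(|u|^{q+1},\varphi)$, we reach the scalar differential inequality
\begin{equation*}
\frac{d}{dt}(u,\varphi)\ge \alpha\,|(u,\varphi)|^{q+1}-C
\end{equation*}
with positive constants $\alpha,C$ independent of $u$. Choosing a smooth $u_0$ (compatible with the boundary conditions) with $(u_0,\varphi)>(C/\alpha)^{1/(q+1)}$ forces $y(t):=(u(t),\varphi)$ to blow up in finite time by comparison with the ODE $\dot y=\alpha y^{q+1}-C$, which proves the theorem.

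The only genuinely new bookkeeping point compared with Theorems~\ref{Th1.blowup} and~\ref{Th2.blowup-KS} is the appearance of the third-derivative weight $\varphi'''$, which is more singular at the endpoints than $\varphi''$; this forces a slightly larger choice of $n$ but does not alter the structure of the argument. Note in particular that the asymmetric Dirichlet boundary conditions at $x=\pm1$ play no role, since the multiplier $\varphi$ is compactly supported strictly inside $(-1,1)$ and the argument is entirely interior.
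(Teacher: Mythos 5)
Your proposal is correct and follows essentially the same route as the paper, which proves Theorem~\ref{Th3.blow} by multiplying by the compactly supported weight $\varphi(x)=(\eb^2-x^2)^n$ and repeating the interior argument of Theorems~\ref{Th1.blowup} and~\ref{Th2.blowup-KS}; your only addition is to carry out explicitly the estimate of the new term $(u,\varphi''')$ with the threshold $n\ge\frac{3(q+1)}{q}$, which the paper leaves to the reader. No gaps.
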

Indeed, the proof of this result is based on the multiplication of \eqref{3.eq2} by $\varphi(x):=(\eb^2-x^2)^n$ for sufficiently large $n$ and $\eb>0$ and repeats word by word the proof of Theorem \ref{Th2.blowup-KS}, so we leave the details to the reader.

\begin{remark}\label{Rem3.difference} We remind that the result of Corollary \ref{Co3.smooth} on the global existence of smooth solutions contains a restriction $p\le2$ (even in the case where $f(u)\equiv 0$) which looks unusual and too strict in comparison with  the classical theory of KdV equations on the whole line or on the circle. Indeed, in the defocusing case of a generalized KdV equation:
\begin{equation}\label{3.df}
\partial_t u+\partial_x^3 u=\partial_x(u|u|^p)
\end{equation}
which is considered in this section, the corresponding  Cauchy problem or periodic boundary value problem is globally well-posed in $H^1$ for any $p\ge0$, see \cite{BoSa,Tao} and references therein. This is related with the following energy conservation law for problem \eqref{3.df}:
\begin{equation}\label{3.conservation}
\partial_t\(\frac12|\partial_x u|^2+\frac1{p+2}|u|^{p+2}\)-\partial_x\(\partial_tu\partial_xu+\frac12(\partial_{x}^2u-u|u|^p)^2\)=0.
\end{equation}
Moreover, in the self-focusing case
\begin{equation}
\partial_tu+\partial_x^3u+\partial_x(u|u|^p)=0,
\end{equation}
where the energy is not sign-definite,
the critical exponent is $p=4$, namely, if $p<4$, the global existence of smooth solutions can be established and for $p=4$ the blow up of higher norms is occurred at least for the Caucgy problem with properly chosen initial data, see \cite{MaMe} and references therein.
\par
Unfortunately, identity \eqref{3.conservation} is not very helpful for the case of Dirichlet boundary conditions since its integration over $x$ leads to the extra term $\frac12|\partial_x^2u|^2\big|_{x=-1}^{x=1}$ which is out of control. Thus, we are unable to use the energy identity and this, in turn, leads to the restriction $p\le2$. Actually, we do not know whether or not this restriction is essential.
\end{remark}
\begin{remark}\label{Rem3.rev}
One more essential difference with the conservative case is the solvability of the generalized KdV backward in time. In the case of Cauchy problem or periodic boundary value problem, the corresponding problem is {\it reversible} in time due to the invariance with respect to the transformation $t\to-t$, $x\to-x$. Thus, the global solvability backward in time follows from the global solvability forward in time. That is clearly not the case for the Dirichlet boundary conditions already due to the smoothing property established in Corollary \ref{Co3.smooth}. Moreover, it can be shown that the only smooth solution of \eqref{3.df} with Dirichlet boundary conditions defined for all $t\in\R$ is $u\equiv0$. Indeed, the above mentioned transformation does not change the equation \eqref{3.df}, but changes the boundary conditions and leads to the problem
\begin{equation}\label{3.back}
\partial_tu+\partial_x^3u=\partial_x(u|u|^p),\ \ u\big|_{x=-1}=\partial_xu\big|_{x=-1}=u\big|_{x=1}=0.
\end{equation}
Multiplying this equation by $(1-x)u$ and integrating over $x$, we have
$$
\frac12\frac d{dt}\|u\|^2_{L^2_{1-x}}=\frac32\|\partial_xu\|^2_{L^2}+\frac{p+1}{p+2}\|u\|^{p+2}_{L^{p+2}}\ge C\|u\|_{L^2_{1-x}}^{p+2}
$$
which gives the desired result.
\end{remark}


\end{document}